\def\NAT@def@citea{\def\@citea{\NAT@separator}}
\theoremstyle{plain}
\newtheorem{theorem}{Theorem}[section]
\newtheorem{lemma}[theorem]{Lemma}
\newtheorem{corollary}[theorem]{Corollary}
\newtheorem{proposition}[theorem]{Proposition}
\theoremstyle{definition}
\newtheorem{example}[theorem]{Example}
\theoremstyle{remark}
\newtheorem{remark}{Remark}
\begin{document}

\articletype{ARTICLE TEMPLATE}
\title{Discussion on the Leibniz rule and Laplace transform of fractional derivatives using series representation}
\author{
\name{Yiheng Wei\textsuperscript{a}, Da-Yan Liu\textsuperscript{b}, Peter W. Tse\textsuperscript{c} and Yong Wang\textsuperscript{a} \thanks{CONTACT Y. Wang. Email: yongwang@ustc.edu.cn}}
\affil{\textsuperscript{a}Department of Automation, University of Science and Technology of China, Hefei, 230026, China; \textsuperscript{b}INSA Centre Val de Loire, Universit\'{e} d'Orl\'{e}ans, PRISME EA 4229, Bourges Cedex, 18022, France  ; \textsuperscript{c}Department of Systems Engineering and Engineering Management, City University of Hong Kong, Hong Kong, 999077, China.
}}
\maketitle

\begin{abstract}
Taylor series is a useful mathematical tool when describing and constructing a function. With the series representation, some properties of fractional calculus can be revealed clearly. On this basis, the Lebiniz rule and Laplace transform of fractional calculus is investigated. It is analytically shown that the commonly used Leibniz rule cannot be applied for Caputo derivative. Similarly, the well-known Laplace transform of Riemann--Liouville derivative is doubtful for $n$-th continuously differentiable function. After pointing out such problems, the exact formula of Caputo Leibniz rule and the explanation of Riemann--Liouville Laplace transform are presented. Finally, three illustrative examples are revisited to confirm the obtained results.
\end{abstract}

\begin{keywords}
Fractional calculus, Taylor series, Leibniz rule, Laplace transform, nonzero initial instant.
\end{keywords}

\begin{amscode}
Primary: 26A33, Secondary: 30K05, 44A10, 65L05
\end{amscode}

\section{Introduction}\label{Section 1}
\setcounter{section}{1} \setcounter{equation}{0}
The subject of fractional calculus (that is, calculus of integrals and derivatives of any arbitrary real or complex order) has gained considerable popularity and importance during the past three decades or so, due mainly to its demonstrated applications in numerous seemingly diverse and widespread fields of science and engineering. It does indeed provide several potentially powerful tools for modelling and controlling those practical plants with history dependent and global correlative properties. As for the recent relevant works, the readers can refer to the excellent papers \cite{West:2016Book,Boyadzhiev:2018ITSF,Sun:2018CNSNS,Wei:2019NODY} and the references therein.

Fractional derivatives lead to a lot of unusual properties. For example, all the well-known fractional derivatives involving Riemann--Liouville, Caputo and Gr\"{u}mwald--Letnikov definitions violate the usual form of the Leibniz rule ${}_a^{}D_t^\alpha \{f(t)g(t)\}={}_a^{}D_t^\alpha f(t)g(t)+f(t){}_a^{}D_t^\alpha g(t)$ \cite{Liu:2015SP,Liu:2017Auto}. Though many references \cite{Srivastava:1993ITSF,Tarasov:2013CNSNS,Tarasova:2016FDC,Tarasov:2016JCND} confirm this unusual property and present the corresponding Leibniz rule, the questionable rule was still used widely \cite{Chen:2014CNSNS,Mujumdar:2015TM}. Particularly, the original study of Leibniz rule for fractional derivatives can date back to 1832 when Liouville focused on the issue and developed a correct Leibniz formula in the form of infinite series summation \cite{Oldham:1974Book,Sayevand:2018CNSNS}. Generalizations of the Leibniz rule for fractional derivatives are also derived successively by Osler in \cite{Osler:1970JMA,Osler:1971AMM,Osler:1972MC}. On this basis, Tremblay improved the fractional Leibniz rule and its integral analogue further \cite{Tremblay:2013ITSF}. These Leibniz rules are especially useful for the evaluation of fractional derivative of a product function, such as, the Lyapunov function. Nonetheless, on the issue of Leibniz rule, only Riemann-Liouville definition was considered.

Under Caputo definition, to the best of our current knowledge, the related Leibniz rule in infinite series form has been adopted in many papers \cite{Aguila:2014CNSNS,Aguila:2015CNSNS,Hua:2018NEU}. 
In fact, the adopted Leibniz rule in \cite{Aguila:2014CNSNS} holds for Riemann-Liouville derivative or even Riemann-Liouville integral and it has been highly accepted and widely applied. However, no literature could support the derivation process of such a rule and explain the peculiar items ${}_a^{\rm C}{\mathscr D}_t^{\alpha  - k}g\left( t \right)$ with $k >\alpha$. Fortunately, there are several attempts to define a new type of Leibniz rule for Caputo fractional derivative. For instance, Ishteva originated the related discussion and revealed the difference between Caputo and Riemann-Liouville derivative in Lemma 3 of a pioneering article \cite{Ishteva:2005MSRJ}. Afterwards, an equivalent expression of the Leibniz rule for Caputo derivative with the order belonging to $(0,1)$ was presented (see Remark 1 of \cite{Freed:2007FCAA} and Theorem 3.17 of \cite{Diethelm:2010Book}). From then, references \cite{Ortigueira:2015JCP,Teodoro:2019JCP} revisited the issue in but gave a questionable conclusion. Note that study on such Leibniz rule is still at its early stage. Additionally, as another typical application of fractional calculus, the Laplace transform of Riemann-Liouville derivative contains fractional derivatives as initial value which is not practical enough \cite{Zhou:2017ISA,Wu:2017AMC}. When the considered function is $n$-th continuously differentiable, the results will lead to singularity. When the initial instant is nonzero, the result becomes more complex \cite{Lundberg:2007ICSM,Belkhatir:2018SCL}. With the adoption of series representation \cite{Williams:2012FCAA,Wei:2017FCAAb,Wei:2018arXiv,Wei:2019CNSNS}, the Caputo Leibniz rule and Riemann--Liouville Laplace transform might be established anew. 

Bearing the above discussion in mind, this paper aims at addressing the applications of fractional series representation on Leibniz rule and Laplace transform. To this end, Section \ref{Section 2} presents basic definitions and the so-called series representation of fractional calculus. Section \ref{Section 3} shows the violation of fractional derivative on well-adopted Leibniz rule and Laplace transform and derives the correct counterparts. Section \ref{Section 4} validates the validity of the indicated fact. Section \ref{Section 5} draws the conclusions.

\section{Preliminaries}\label{Section 2}
\setcounter{section}{2} \setcounter{equation}{0}
The concept of fractional calculus has been known because of the development of the regular calculus, with the origin probably being associated with the thought of Leibniz about 300 years ago. Today, there are numerous different definitions related to fractional calculus, among which Riemann--Liouville and Caputo definitions are two of the most popular ones which have indeed played a striking role in science and engineering \cite{Podlubny:1999Book}.

In 1847, Riemann derived a definition for fractional integral as
\begin{equation}\label{Eq1}
{\textstyle {{}_a^{\rm R}}{{\mathscr I}_{t}^\alpha}{f\left( t \right)}  \triangleq \frac{1}{{\Gamma \left( \alpha  \right)}}\int_{{a}}^t {{{\left( {t - \tau } \right)}^{\alpha -1}}{ f\left( \tau  \right)}{\rm{d}}\tau },}
\end{equation}
which is commonly called Riemann--Liouville fractional integral, where $\alpha>0$ is the integral order, $a$ is the constant lower terminal and $\Gamma \left( \cdot \right)$ is the Euler gamma function.

In the light of such a fractional integral in (\ref{Eq1}), two fractional derivatives were established successively, i.e., Riemann--Liouville case (in 1872)
\begin{equation}\label{Eq2}
{\textstyle {}^{\rm R}_a{\mathscr D}_{t}^\alpha f\left( t \right) \triangleq \frac{{\rm{d}}^n}{{{\rm{d}}t^n}} {_a^{\rm R}{\mathscr I}_{t}^{n-\alpha}} f\left( t \right),}
\end{equation}
and Caputo case (in 1967)
\begin{equation}\label{Eq3}
{\textstyle {}^{\rm C}_a{\mathscr D}_{t}^\alpha f\left( t \right) \triangleq  {_a^{\rm R}}{\mathscr I}_{t}^{n-\alpha}\frac{{\rm{d}}^n}{{{\rm{d}}t^n}} f\left( t \right),}
\end{equation}
with $n\in\mathbb{N}_+$ and $\alpha\in(n-1,n)$.

Reference \cite{Podlubny:1999Book} shows that
\begin{equation}\label{Eq4}
{\textstyle
{}^{\rm R}_a{\mathscr I}_{t}^\alpha f\left( t \right)={}^{\rm R}_a{\mathscr D}_{t}^{-\alpha} f\left( t \right).}
\end{equation}
for any $\alpha>0$ and if the mentioned function $f(t)$ is $(n-1)$-times continuously differentiable and $f^ {(n)}(t)$ is integrable for $t\ge a$, the relationship between the Riemann--Liouville derivative and the Caputo one can be expressed as
\begin{equation}\label{Eq5}
{\textstyle
{}_a^{\rm{R}}{\mathscr D}_t^\alpha f\left( t \right) = {}_a^{\rm{C}}{\mathscr D}_t^\alpha f\left( t \right) + \sum\nolimits_{k = 0}^{n - 1} {\frac{{{f^{\left( k \right)}}\left( a \right)}}{{\Gamma \left( {k + 1-\alpha} \right)}}{{\left( {t - a} \right)}^{k - \alpha }}} .}
\end{equation}
for $\alpha\in(n-1,n)$ and $n\in\mathbb{N}_+$.

Based on the aforementioned definitions with $\alpha\in(n-1,n),~\forall n\in\mathbb{N}_+$, the following relations can be found smoothly \cite{Wei:2018arXiv}
\begin{equation}\label{Eq6}
{\textstyle\begin{array}{l}
 {\frac{{{{\rm{d}}^n}}}{{{\rm{d}}{t^n}}}f}\left( t \right)=\mathop {\lim }\limits_{\alpha  \to n} {}_a^{\rm{R}}{\mathscr D}_t^\alpha f\left( t \right)  = \mathop {\lim }\limits_{\alpha  \to n - } {}_a^{\rm{C}}{\mathscr D}_t^\alpha f\left( t \right),
  \end{array} }
\end{equation}
\begin{equation}\label{Eq7}
{\textstyle\begin{array}{l}
 {}_a^{}{\mathscr I}_t^{n}f\left( t \right)=\mathop {\lim }\limits_{\alpha  \to n} {}_a^{\rm{R}}{\mathscr I}_t^\alpha f\left( t \right) ,
\end{array}}
\end{equation}
which distinctly demonstrate that the fractional calculus could be regarded as the natural generalization of regular calculus. Nevertheless, such fractional derivatives with non-integer order are special integrals in relation to all historical data, which just forms the long memory characteristic of fractional derivatives. Occasionally, this property is also called as nonlocal characteristic or global correlation or history dependence. It is this characteristic that fashions the essential differences between the fractional calculus and the traditional one.

Before moving on, a key lemma will be given first. In light of this lemma, one can use the power functions $(t-a)^k$ and the integer derivatives values ${f^{\left( k \right)}}\left( t \right)$ or ${f^{\left( k \right)}}\left( a \right)$ to represent the fractional calculus, which brings great convenience in the subsequent theoretical analysis.

\begin{lemma}\label{Lemma 1}
(see \cite{Oldham:1974Book,Samko:1993Book,Podlubny:1999Book,Wei:2018arXiv}) 
If $f (t)$ can be expressed a Taylor series expanded at the initial instant $t = a$ and the series $f\left( t \right) = \sum\nolimits_{k = 0}^{ + \infty } {\frac{{{f^{\left( k \right)}}\left( a \right)}}{{k!}}} {\left( {t - a} \right)^k}$ is uniformly convergent on $[a, b]$, then for any $t\in[a,b]$, one has
\begin{equation}\label{Eq8}
{\textstyle{}^{\rm R}_a{\mathscr D}_{t}^\alpha f\left( t \right) = \sum\nolimits_{k = 0}^{+\infty}  {\frac{{{f^{\left( k \right)}}\left( a \right)}}{{\Gamma \left( { k + 1- \alpha } \right)}}} {\left( {t - a} \right)^{k - \alpha }},\alpha\in(-\infty,+\infty),}
\end{equation}
\begin{equation}\label{Eq9}
{\textstyle{}^{\rm C}_a{\mathscr D}_{t}^\alpha f\left( t \right) = \sum\nolimits_{k = n}^{+\infty}  {\frac{{{f^{\left( k \right)}}\left( a \right)}}{{\Gamma \left( { k + 1- \alpha } \right)}}} {\left( {t - a} \right)^{k - \alpha }},\alpha\in(n-1,n),n\in\mathbb{N}_+.}
\end{equation}
If $f (\cdot)$ is an analytic function in an interval $(a,b)$, which means that it can be represented as a convergent series $f\left( \tau  \right) = \sum\nolimits_{k = 0}^{ + \infty } {\frac{{{f^{\left( k \right)}}\left( t \right)}}{{k!}}} {\left( {\tau  - t} \right)^k}$, $\tau\in(a,t)$, then for any $t\in[a,b]$, one has
\begin{equation}\label{Eq10}
\begin{array}{l}
{}_a^{\rm{R}}{\mathscr D}_t^\alpha f\left( t \right)
= \sum\nolimits_{k = 0}^{ + \infty } {\left( {\begin{smallmatrix}
{\alpha }\\
{k }
\end{smallmatrix}} \right)\frac{{{f^{\left( k \right)}}\left( t \right)}}{{\Gamma \left( {k - \alpha  + 1} \right)}}{{\left( {t - a} \right)}^{k - \alpha }}},\alpha\in(-\infty,+\infty),
\end{array}
\end{equation}
\begin{equation}\label{Eq11}
\begin{array}{l}
{}_a^{\rm{C}}{\mathscr D}_t^\alpha f\left( t \right)
= \sum\nolimits_{k = n}^{ + \infty } {\left( {\begin{smallmatrix}
{\alpha  - n}\\
{k - n}
\end{smallmatrix}} \right)\frac{{{f^{\left( k \right)}}\left( t \right)}}{{\Gamma \left( {k - \alpha  + 1} \right)}}{{\left( {t - a} \right)}^{k - \alpha }}},\alpha\in(n-1,n),n\in\mathbb{N}_+.
\end{array}
\end{equation}
\end{lemma}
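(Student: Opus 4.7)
The plan is to handle the four identities in two waves: first derive the Taylor--at--$a$ series (\ref{Eq8}) and (\ref{Eq9}) by substituting the given power series into the integral definitions, and then pivot those results to obtain the Taylor--at--$t$ series (\ref{Eq10}) and (\ref{Eq11}) by re-expanding each coefficient $f^{(k)}(a)$ around the current point.

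For (\ref{Eq8}), I would write ${}_a^{\rm R}\mathscr{D}_t^\alpha f(t) = \frac{{\rm d}^n}{{\rm d}t^n}\bigl[\frac{1}{\Gamma(n-\alpha)}\int_a^t (t-\tau)^{n-\alpha-1} f(\tau)\,{\rm d}\tau\bigr]$ by combining (\ref{Eq1}) and (\ref{Eq2}), substitute the uniformly convergent Taylor expansion, and swap sum and integral (uniform convergence on $[a,b]$ licenses this). Each integral reduces to a Beta function,
$$\int_a^t (t-\tau)^{n-\alpha-1}(\tau-a)^k\,{\rm d}\tau=\frac{\Gamma(n-\alpha)\,\Gamma(k+1)}{\Gamma(n-\alpha+k+1)}(t-a)^{n-\alpha+k},$$
after which differentiating term-by-term via $\tfrac{{\rm d}^n}{{\rm d}t^n}(t-a)^{n-\alpha+k}=\tfrac{\Gamma(n-\alpha+k+1)}{\Gamma(k-\alpha+1)}(t-a)^{k-\alpha}$ produces (\ref{Eq8}); the case $\alpha\le 0$ is covered by stopping before the differentiation. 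For (\ref{Eq9}) I would invoke the Caputo definition (\ref{Eq3}), expand $f^{(n)}(t)=\sum_{k=n}^{+\infty}\frac{f^{(k)}(a)}{(k-n)!}(t-a)^{k-n}$, apply the same Beta-function computation to ${}_a^{\rm R}\mathscr{I}_t^{n-\alpha}$, and shift the index.

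For (\ref{Eq10}) I would take (\ref{Eq8}) as the starting point and re-expand each $f^{(k)}(a)$ at $t$ via $f^{(k)}(a)=\sum_{j=0}^{+\infty}\frac{(-1)^j}{j!}f^{(k+j)}(t)(t-a)^j$, which is legitimate because $f$ is analytic on $(a,b)$. The resulting double series can be regrouped by the total index $m=k+j$, yielding
$${}_a^{\rm R}\mathscr{D}_t^\alpha f(t)=\sum_{m=0}^{+\infty}f^{(m)}(t)(t-a)^{m-\alpha}\sum_{j=0}^{m}\frac{(-1)^j}{j!\,\Gamma(m-j+1-\alpha)}.$$
The inner sum is where the combinatorics lives: using $\frac{1}{\Gamma(m-\alpha+1-j)}=\binom{m-\alpha}{j}\frac{j!}{\Gamma(m-\alpha+1)}$ turns it into the partial alternating sum $\frac{1}{\Gamma(m-\alpha+1)}\sum_{j=0}^{m}(-1)^j\binom{m-\alpha}{j}=\frac{(-1)^m\binom{m-\alpha-1}{m}}{\Gamma(m-\alpha+1)}$, and the upper-negation identity $(-1)^m\binom{m-\alpha-1}{m}=\binom{\alpha}{m}$ finishes the job, giving (\ref{Eq10}). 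The derivation of (\ref{Eq11}) is identical in spirit but starts from (\ref{Eq9}), so the double sum is restricted to $k\ge n$ and the inner partial sum runs only from $j=0$ to $m-n$; its closed form evaluates by the same identities to $\binom{\alpha-n}{m-n}/\Gamma(m-\alpha+1)$.

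The main technical obstacle is not (\ref{Eq8})--(\ref{Eq9}), whose term-by-term manipulations are cleanly justified by uniform convergence, but the re-expansion step in (\ref{Eq10})--(\ref{Eq11}). The Taylor series of $f^{(k)}$ centred at $t$ is only guaranteed to converge for $|\tau-t|$ less than the distance to the nearest singularity, which in the generic analytic--on--$(a,b)$ setting may equal exactly $t-a$; consequently one has to justify the interchange of the sums on $k$ and $j$ on compact subintervals $[a+\varepsilon,t]$ first and then pass to the limit $\varepsilon\downarrow 0$, exploiting absolute convergence of the rearranged series. Beyond that, verifying the binomial identity $(-1)^m\binom{m-\alpha-1}{m}=\binom{\alpha}{m}$ (and the shifted version needed for (\ref{Eq11})) is elementary from the falling-factorial formula and can be dispatched in a line.
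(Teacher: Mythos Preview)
The paper does not supply its own proof of Lemma~\ref{Lemma 1}; it is stated as a known result with references to \cite{Oldham:1974Book,Samko:1993Book,Podlubny:1999Book,Wei:2018arXiv}, so there is no in-paper argument to compare against. Your proposal is a correct and self-contained derivation: the Beta-integral computation for (\ref{Eq8})--(\ref{Eq9}) is standard, and your re-expansion of $f^{(k)}(a)$ about $t$ followed by the partial-alternating-sum identity $\sum_{j=0}^{m}(-1)^j\binom{m-\alpha}{j}=(-1)^m\binom{m-\alpha-1}{m}$ and upper negation cleanly recovers (\ref{Eq10})--(\ref{Eq11}). Your caveat about the radius of convergence at the endpoint $\tau=a$ is well taken and is exactly the sort of care the cited sources typically gloss over.
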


\section{Main Results}\label{Section 3}
\setcounter{section}{3} \setcounter{equation}{0}
This section focuses on the Leibniz rule and the Laplace transform of fractional calculus and develops some interesting observations.

\subsection{Leibniz Rule}
First, Leibniz rule for Riemann--Liouville derivative and integral will be introduced.

\begin{lemma}\label{Lemma 2}
(see \cite{Samko:1993Book,Podlubny:1999Book,Luque:1999ITSF,Babakhani:2017JPOA})
For any constant $\alpha\in(-\infty,+\infty)$, 
let $f(\tau)$ be a function whose Taylor series expansion exists at $t$, then
\begin{equation}\label{Eq12}
\begin{array}{l}
{}_a^{\rm{R}}{\mathscr D}_t^\alpha \left\{ {f\left( t \right)g\left( t \right)} \right\}
= \sum\nolimits_{j = 0}^{ + \infty } {\big( {\begin{smallmatrix}
\alpha \\
j
\end{smallmatrix}} \big){f^{\left( j \right)}}\left( t \right){}_a^{\rm{R}}{\mathscr D}_t^{\alpha  - j}g\left( t \right)},
\end{array}
\end{equation}
and if $g(\tau)$ can be expressed as a Taylor series at $t$, one has
\begin{equation}\label{Eq13}
\begin{array}{l}
{}_a^{\rm{R}}{\mathscr D}_t^\alpha \left\{ {f\left( t \right)g\left( t \right)} \right\}
= \sum\nolimits_{j = 0}^{ + \infty } {\big( {\begin{smallmatrix}
\alpha \\
j
\end{smallmatrix}} \big){g^{\left( j \right)}}\left( t \right){}_a^{\rm{R}}{\mathscr D}_t^{\alpha  - j}f\left( t \right)}.
\end{array}
\end{equation}
\end{lemma}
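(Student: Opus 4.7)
The plan is to derive (Eq12) directly from the series representation of the Riemann--Liouville derivative in equation (Eq10) of Lemma~\ref{Lemma 1}, combined with the classical integer-order Leibniz rule. First, I would treat $h(t):=f(t)g(t)$ as a single function analytic at $t$ and apply (Eq10) to it:
\begin{equation*}
{}_a^{\rm R}{\mathscr D}_t^\alpha\{f(t)g(t)\}
=\sum_{k=0}^{+\infty}\binom{\alpha}{k}\frac{(fg)^{(k)}(t)}{\Gamma(k-\alpha+1)}(t-a)^{k-\alpha}.
\end{equation*}
Then I would expand the ordinary $k$-th derivative of the product by the classical Leibniz rule $(fg)^{(k)}(t)=\sum_{j=0}^{k}\binom{k}{j}f^{(j)}(t)g^{(k-j)}(t)$.

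The second step is to reindex via $m=k-j$, obtaining a double series over $j,m\ge 0$, and apply the combinatorial identity
\begin{equation*}
\binom{\alpha}{m+j}\binom{m+j}{j}=\binom{\alpha}{j}\binom{\alpha-j}{m},
\end{equation*}
which is verified in one line by writing both sides in terms of $\Gamma$. This factorization lets me pull the $j$-indexed data $\binom{\alpha}{j}f^{(j)}(t)$ outside, leaving an inner sum over $m$ of exactly the shape of (Eq10) applied with order $\alpha-j$ to $g$. Thus the inner sum collapses to ${}_a^{\rm R}{\mathscr D}_t^{\alpha-j}g(t)$, yielding (Eq12). Identity (Eq13) follows by interchanging the roles of $f$ and $g$ in the classical Leibniz rule, which is symmetric; the extra hypothesis that $g$ admits a Taylor expansion at $t$ is exactly what is needed to apply (Eq10) to $g$ in the symmetric derivation.

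The step I expect to be the main obstacle is rigorously justifying the interchange of summation in the $(j,m)$-double series. I would handle this by noting that, under the assumed Taylor expansions at $t$ and the analyticity window implicit in Lemma~\ref{Lemma 1}, the derivatives $f^{(j)}(t)$ and $g^{(m)}(t)$ admit Cauchy-type majorants $M\rho^{-j}j!$ and $N\sigma^{-m}m!$ on a suitable bidisk, and combining these with $|(t-a)^{k-\alpha}|$ uniformly bounded on the compact interval provides an absolutely convergent majorant, legitimizing Fubini for series. A secondary delicacy is that applying (Eq10) to $fg$ strictly requires $fg$ analytic at $t$; under the hypotheses of (Eq12) this is guaranteed as soon as $g$ is at least locally analytic around $t$, so one should either interpret ``Taylor series expansion exists at $t$'' in this combined sense or, alternatively, first establish (Eq12) for the dense class of polynomial $g$ (where both sides terminate) and then extend by continuity of ${}_a^{\rm R}{\mathscr D}_t^{\alpha-j}$ on uniformly convergent series.
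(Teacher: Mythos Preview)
Your approach is essentially the same as the paper's. The paper does not give a full proof but remarks that the lemma ``can be checked by the mentioned series in Lemma~\ref{Lemma 1} and the facts $\sum_{k=0}^{+\infty}\sum_{j=0}^{k}=\sum_{j=0}^{+\infty}\sum_{k=j}^{+\infty}$ and $\binom{\alpha}{i+j}\binom{i+j}{j}=\binom{\alpha}{j}\binom{\alpha-j}{i}$,'' which is exactly your plan: apply the representation (Eq10) to the product, expand $(fg)^{(k)}$ by the classical Leibniz rule, interchange the order of summation, use the binomial identity, and recognize the inner sum as (Eq10) for $g$ with order $\alpha-j$. Your additional attention to the Fubini step and to the analyticity requirement on $g$ (needed to invoke (Eq10) both for $fg$ and for the inner sum) goes beyond what the paper spells out, but the underlying argument is identical.
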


This lemma can be checked by the mentioned series in Lemma \ref{Lemma 1} and the facts $\sum\nolimits_{k = 0}^{ +\infty } {\sum\nolimits_{j = 0}^{ k } {} }=\sum\nolimits_{j = 0}^{ +\infty } {\sum\nolimits_{k = j}^{+\infty} }$, $\big( {\begin{smallmatrix}
\alpha \\
{i + j}
\end{smallmatrix}} \big)$ $\big( {\begin{smallmatrix}
{i + j}\\
j
\end{smallmatrix}} \big)= \big( {\begin{smallmatrix}
\alpha \\
j
\end{smallmatrix}} \big)\big( {\begin{smallmatrix}
{\alpha  - j}\\
i
\end{smallmatrix}} \big)$.
Setting $g(t)=t^m$, $m\in\mathbb{N}$, one can obtain the following corollary.
\begin{corollary} \label{Corollary 1}
For any suitable $f(t)$
\begin{equation}\label{Eq14}
\begin{array}{l}
{}_a^{\rm{R}}{\mathscr I}_t^{\alpha} \left\{ {f\left( t \right){t^m}} \right\} = \sum\nolimits_{k = 0}^m {{{\left( { - 1} \right)}^k}\left( {\begin{smallmatrix}
m\\
k
\end{smallmatrix}} \right)\frac{{\Gamma \left( \alpha + k  \right)}}{{\Gamma \left( {\alpha} \right)}}{t^{m - k}}{}_a^{\rm{R}}{\mathscr I}_t^{\alpha  + k}f\left( t \right)},
\end{array}
\end{equation}
\begin{equation}\label{Eq15}
\begin{array}{l}
{}_a^{\rm{R}}{\mathscr D}_t^{\alpha} \left\{ {f\left( t \right){t^m}} \right\} = \sum\nolimits_{k = 0}^m {{{\left( { - 1} \right)}^k}\left( {\begin{smallmatrix}
m\\
k
\end{smallmatrix}} \right)\frac{{\Gamma \left( -\alpha + k  \right)}}{{\Gamma \left( {-\alpha} \right)}}{t^{m - k}}{}_a^{\rm{R}}{\mathscr D}_t^{\alpha  - k}f\left( t \right)},
\end{array}
\end{equation}
hold for any $m\in\mathbb{N}$ and $\alpha>0$.
\end{corollary}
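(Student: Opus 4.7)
The plan is to obtain both identities as special cases of Lemma \ref{Lemma 2} by choosing $g(t)=t^m$, then converting the binomial coefficients $\big(\begin{smallmatrix}\alpha\\j\end{smallmatrix}\big)$ into the gamma-function form that appears in the statement. Since $g(\tau)=\tau^m$ is a polynomial, its Taylor expansion at any point $t$ is the finite binomial sum $\tau^m=\sum_{k=0}^{m}\binom{m}{k}t^{m-k}(\tau-t)^k$, which is trivially (uniformly) convergent on every compact interval, so the hypothesis of Lemma \ref{Lemma 2} is satisfied with no side conditions on $f$ beyond being ``suitable'' in the sense used there.

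First I would substitute $g(t)=t^m$ into formula (\ref{Eq13}). The derivatives evaluate to $g^{(j)}(t)=\frac{m!}{(m-j)!}t^{m-j}$ for $0\le j\le m$ and $g^{(j)}(t)=0$ for $j>m$, so the infinite series collapses to a finite sum:
\begin{equation*}
{}_a^{\rm R}{\mathscr D}_t^{\alpha}\{f(t)\,t^m\}=\sum_{j=0}^{m}\bigl(\begin{smallmatrix}\alpha\\ j\end{smallmatrix}\bigr)\frac{m!}{(m-j)!}\,t^{m-j}\,{}_a^{\rm R}{\mathscr D}_t^{\alpha-j}f(t).
\end{equation*}
The next step is to rewrite the coefficient $\bigl(\begin{smallmatrix}\alpha\\ j\end{smallmatrix}\bigr)\frac{m!}{(m-j)!}$ in the form prescribed by (\ref{Eq15}). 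Using the reflection-type identity $\alpha(\alpha-1)\cdots(\alpha-j+1)=(-1)^j(-\alpha)(-\alpha+1)\cdots(-\alpha+j-1)=(-1)^j\,\Gamma(-\alpha+j)/\Gamma(-\alpha)$, one obtains $\bigl(\begin{smallmatrix}\alpha\\ j\end{smallmatrix}\bigr)=(-1)^j\Gamma(-\alpha+j)/[\Gamma(-\alpha)j!]$, and combining with $m!/(m-j)!=\binom{m}{j}j!$ yields exactly the factor $(-1)^j\binom{m}{j}\Gamma(-\alpha+j)/\Gamma(-\alpha)$ appearing in (\ref{Eq15}). Relabeling $j\to k$ completes the derivative identity for $\alpha>0$ (and indeed for any real $\alpha$).

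To recover (\ref{Eq14}) I would simply apply (\ref{Eq15}) with $\alpha$ replaced by $-\alpha$, and then invoke the identification $_a^{\rm R}{\mathscr I}_t^{\beta}={}_a^{\rm R}{\mathscr D}_t^{-\beta}$ from (\ref{Eq4}) on both sides. This swaps $\Gamma(-\alpha+k)/\Gamma(-\alpha)$ with $\Gamma(\alpha+k)/\Gamma(\alpha)$ and $_a^{\rm R}{\mathscr D}_t^{\alpha-k}$ with $_a^{\rm R}{\mathscr I}_t^{\alpha+k}$, producing (\ref{Eq14}) without further work.

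I do not expect any genuine obstacle here; the only delicate point is the gamma-function rewriting of $\bigl(\begin{smallmatrix}\alpha\\ j\end{smallmatrix}\bigr)$ when $\alpha$ is non-integer, which must be done through the falling-factorial-to-$\Gamma$ identity rather than the naive $\Gamma(\alpha+1)/[\Gamma(j+1)\Gamma(\alpha-j+1)]$ quotient (the latter becomes ambiguous when $\alpha-j+1$ is a non-positive integer). Handling this consistently guarantees that both (\ref{Eq14}) and (\ref{Eq15}) hold uniformly for all $\alpha>0$ and $m\in\mathbb{N}$ as stated.
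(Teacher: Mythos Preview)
Your proposal is correct and follows exactly the approach the paper indicates: the paper simply states ``Setting $g(t)=t^m$, $m\in\mathbb{N}$, one can obtain the following corollary'' without spelling out the gamma-function manipulations, so you have in fact supplied more detail than the paper does. The substitution into (\ref{Eq13}), the truncation of the sum at $j=m$, and the rewriting of $\bigl(\begin{smallmatrix}\alpha\\ j\end{smallmatrix}\bigr)$ via the falling factorial to reach $(-1)^j\binom{m}{j}\Gamma(-\alpha+j)/\Gamma(-\alpha)$ are all sound, and recovering (\ref{Eq14}) from (\ref{Eq15}) by the sign flip $\alpha\to-\alpha$ together with (\ref{Eq4}) is legitimate since Lemma \ref{Lemma 2} is stated for all real $\alpha$.
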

\begin{remark}\label{Remark 1}
Though the rule in (\ref{Eq12}) with $\alpha>0$ has been suggested by Liouville in 1832 as a generalization of the Leibniz rule for regular derivative, it is convenient to preserve Leibniz's name also in this case \cite{Luque:1999ITSF,Babakhani:2017JPOA}. With the help of (\ref{Eq6}) and $\Gamma(-k)=\pm\infty,~\forall k\in\mathbb{N}$, the $\alpha=n\in\mathbb{N}_+$ case follows from (\ref{Eq12}) and (\ref{Eq13}) immediately
\begin{equation}\label{Eq16}
{\textstyle
\frac{{{{\rm{d}}^n}}}{{{\rm{d}}{t^n}}}\left\{ {f\left( t \right)g\left( t \right)} \right\} = \sum\nolimits_{j = 0}^n {\left( {\begin{smallmatrix}
n\\
j
\end{smallmatrix}} \right){f^{\left( j \right)}}\left( t \right){g^{\left( {n - j} \right)}}\left( t \right)} ,}
\end{equation}
\begin{equation}\label{Eq17}
{\textstyle
\frac{{{{\rm{d}}^n}}}{{{\rm{d}}{t^n}}}\left\{ {f\left( t \right)g\left( t \right)} \right\} = \sum\nolimits_{j = 0}^n {\left( {\begin{smallmatrix}
n\\
j
\end{smallmatrix}} \right){g^{\left( j \right)}}\left( t \right){f^{\left( {n - j} \right)}}\left( t \right)} ,}
\end{equation}
which is just the traditional Leibniz rule.

Similarly, applying the formula (\ref{Eq4}), a special case, i.e., $\alpha=0$ can be derived from Lemma \ref{Lemma 2} as
\begin{equation}\label{Eq18}
{\textstyle
{}_a^{\rm{R}}{\mathscr D}_t^0\left\{ {f\left( t \right)g\left( t \right)} \right\} = {f\left( t \right)g\left( t \right)} .}
\end{equation}

When $\alpha<0$, the results in Lemma \ref{Lemma 2} can be simplified as
\begin{equation}\label{Eq19}
{\textstyle
{}_a^{\rm{R}}{\mathscr I}_t^{-\alpha} \left\{ {f\left( t \right)g\left( t \right)} \right\}= \sum\nolimits_{j = 0}^{ + \infty } {\big( {\begin{smallmatrix}
\alpha \\
j
\end{smallmatrix}} \big){f^{\left( j \right)}}\left( t \right){}_a^{\rm{R}}{\mathscr I}_t^{-\alpha  + j}g\left( t \right)} ,}
\end{equation}
and
\begin{equation}\label{Eq20}
{\textstyle
{}_a^{\rm{R}}{\mathscr I}_t^{-\alpha} \left\{ {f\left( t \right)g\left( t \right)} \right\}\\
= \sum\nolimits_{j = 0}^{ + \infty } {\big( {\begin{smallmatrix}
\alpha \\
j
\end{smallmatrix}} \big){g^{\left( j \right)}}\left( t \right){}_a^{\rm{R}}{\mathscr I}_t^{-\alpha  + j}f\left( t \right)} ,}
\end{equation}
respectively.
\end{remark}

Notably, many published results adopted the Leibniz rule (\ref{Eq12}) for the Caputo definition (see (2) in \cite{Aguila:2014CNSNS}, (3) in \cite{Aguila:2015CNSNS}, (7) in \cite{Hua:2018NEU}), that is to say,
\begin{equation}\label{Eq21}
{\textstyle
{}_a^{\rm{C}}{\mathscr D}_t^\alpha \left\{ {f\left( t \right)g\left( t \right)} \right\} = \sum\nolimits_{j = 0}^{ + \infty } {\big( {\begin{smallmatrix}
\alpha \\
j
\end{smallmatrix}} \big){f^{\left( j \right)}}\left( t \right){}_a^{\rm{C}}{\mathscr D}_t^{\alpha  - j}g\left( t \right)},}
\end{equation}
while almost no scholars explored whether it is true or false.

In what follows, we will analytically prove that the Leibniz rule (\ref{Eq21}) is not established unless the Caputo derivative of $g(t)$ is equal to the counterpart of Riemann--Liouville case.

\begin{proposition}\label{Proposition 1}
Leibniz rule of Caputo derivative (\ref{Eq21}) does not hold.
\end{proposition}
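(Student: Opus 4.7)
The plan is to falsify (\ref{Eq21}) by combining the verified Riemann--Liouville Leibniz rule (\ref{Eq12}) with the bridge identity (\ref{Eq5}), and then clinch the failure with a single clean specialization. I would start from (\ref{Eq12}) applied to the product $f(t)g(t)$, use (\ref{Eq5}) on the left-hand side to split off the finite correction $\sum_{k=0}^{n-1}\frac{(fg)^{(k)}(a)}{\Gamma(k+1-\alpha)}(t-a)^{k-\alpha}$ and thereby expose ${}_a^{\rm C}{\mathscr D}_t^\alpha\{fg\}$, and then use (\ref{Eq5}) once more on each factor ${}_a^{\rm R}{\mathscr D}_t^{\alpha-j}g$ with $0\le j\le n-1$, again picking up finite power-law corrections built from $g^{(\ell)}(a)$. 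For $j\ge n$ one has $\alpha-j\le 0$, and under the standard convention ${}_a^{\rm C}{\mathscr D}_t^{-\beta}:={}_a^{\rm R}{\mathscr I}_t^{\beta}$ the Riemann--Liouville and Caputo symbols agree, so those terms contribute identically to both sides. After rearrangement, the discrepancy between this rigorously derived expression and the conjectured identity (\ref{Eq21}) is a combination of boundary data $(fg)^{(k)}(a)$ on one side versus products $f^{(j)}(t)g^{(\ell)}(a)$ on the other, mixing the fixed initial point $a$ with the moving point $t$; that mismatch cannot vanish for generic $f,g$, which already hints structurally that (\ref{Eq21}) is false.

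To make the refutation decisive I would specialize to $g(t)\equiv 1$. For $0\le j<\alpha$ the Caputo derivative ${}_a^{\rm C}{\mathscr D}_t^{\alpha-j}(1)$ vanishes because every classical derivative of a constant is zero, while for $j>\alpha$ the convention above gives ${}_a^{\rm C}{\mathscr D}_t^{\alpha-j}(1)=\frac{(t-a)^{j-\alpha}}{\Gamma(j+1-\alpha)}$. Thus the right-hand side of (\ref{Eq21}) collapses to
\begin{equation*}
\sum\nolimits_{j=n}^{+\infty}\big(\begin{smallmatrix}\alpha\\j\end{smallmatrix}\big)f^{(j)}(t)\frac{(t-a)^{j-\alpha}}{\Gamma(j+1-\alpha)},
\end{equation*}
whereas by (\ref{Eq11}) the true left-hand side is
\begin{equation*}
{}_a^{\rm C}{\mathscr D}_t^\alpha f(t)=\sum\nolimits_{k=n}^{+\infty}\big(\begin{smallmatrix}\alpha-n\\k-n\end{smallmatrix}\big)\frac{f^{(k)}(t)}{\Gamma(k+1-\alpha)}(t-a)^{k-\alpha}.
\end{equation*}
Matching coefficient-wise for arbitrary analytic $f$ would force $\big(\begin{smallmatrix}\alpha\\j\end{smallmatrix}\big)=\big(\begin{smallmatrix}\alpha-n\\j-n\end{smallmatrix}\big)$ for every $j\ge n$; already at $j=n$ this reads $\big(\begin{smallmatrix}\alpha\\n\end{smallmatrix}\big)=1$, which fails whenever $\alpha\notin\mathbb{N}$ (e.g.\ $\binom{1/2}{1}=1/2\ne 1$). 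Hence (\ref{Eq21}) cannot hold in general.

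The main obstacle I anticipate is not computational but definitional: the symbol ${}_a^{\rm C}{\mathscr D}_t^{\alpha-j}g$ appearing in (\ref{Eq21}) is intrinsically meaningful only for $\alpha-j>0$ via (\ref{Eq3}), and the ``peculiar items'' with $j>\alpha$ already flagged in Section \ref{Section 1} have to be disambiguated by declaring ${}_a^{\rm C}{\mathscr D}_t^{-\beta}:={}_a^{\rm R}{\mathscr I}_t^{\beta}$ for $\beta>0$. I would state that convention explicitly at the outset; otherwise either (\ref{Eq21}) is outright ill-posed (refuting it in an even stronger sense) or an alternative convention is intended, in which case the counter-example with $g\equiv 1$ would need to be re-examined under that reading. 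Under the standard convention, however, the specialization above is unambiguous and settles the proposition.
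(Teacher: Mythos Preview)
Your proposal is correct and its decisive part---specializing to $g\equiv 1$, reading off the right-hand side of (\ref{Eq21}) under the convention ${}_a^{\rm C}{\mathscr D}_t^{-\beta}={}_a^{\rm R}{\mathscr I}_t^{\beta}$, and comparing with the series (\ref{Eq11})---is exactly the paper's first argument (``Conflict with the facts'', equation (\ref{Eq22})); you make the coefficient mismatch $\binom{\alpha}{n}\ne 1$ more explicit than the paper does. The paper supplements this with a limit computation (\ref{Eq23})--(\ref{Eq24}) and a separate self-consistency manipulation (\ref{Eq25}), but neither is needed once your $g\equiv 1$ specialization is in hand.
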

\begin{proof}
To illustrate the violation of (\ref{Eq21}), two aspects will be deployed. Likewise, if we put $\alpha$ in the interval $(n-1,n)$ with positive integer $n$, then a question will emerge instantly, namely, how to interpret ${}_a^{\rm{C}}{\mathscr D}_t^{\alpha  - j}g\left( t \right),~j > n$. Recalling Leibniz rule for Riemann--Liouville derivative, one directly assumes that ${}_a^{\rm{C}}{\mathscr D}_t^{\alpha  - j}g\left( t \right)\triangleq{}_a^{\rm{R}}{\mathscr I}_t^{j-\alpha}g\left( t \right)$ for $j > n$.

\begin{itemize}
  \item Conflict with the facts.
\end{itemize}

Assuming (\ref{Eq21}) holds, $g(t)=1,~t\ge a$ gives
\begin{equation}\label{Eq22}
\begin{array}{rl}
{}_a^{\rm{C}}{\mathscr D}_t^\alpha f\left( t \right) =&\hspace{-6pt} {}_a^{\rm{C}}{\mathscr D}_t^\alpha \left\{ {f\left( t \right)g\left( {t } \right)} \right\}\\
=&\hspace{-6pt}\sum\nolimits_{k = 0}^{ + \infty } {\big( {\begin{smallmatrix}
\alpha \\
k
\end{smallmatrix}} \big){f^{\left( k \right)}}\left( t \right){}_a^{\rm{C}}{\mathscr D}_t^{\alpha  - k}g\left( {t} \right)} \\
 =&\hspace{-6pt} \sum\nolimits_{k = n}^{ + \infty } {\big( {\begin{smallmatrix}
\alpha \\
k
\end{smallmatrix}} \big)\frac{{{f^{\left( k \right)}}\left( t \right)}}{{\Gamma \left( {k + 1 - \alpha } \right)}}{{\left( {t - a} \right)}^{k - \alpha }}}.
\end{array}
\end{equation}

On one hand, such a formula (\ref{Eq22}) conflicts with the validated conclusion in (\ref{Eq11}). On the other hand, revisiting the lower discontinuity of Caputo derivative to its order, it follows from (\ref{Eq22}) that
\begin{equation}\label{Eq23}
\begin{array}{rl}
\mathop {\lim }\limits_{\alpha  \to \left( {n - 1} \right) + } {}_a^{\rm{C}}{\mathscr D}_t^\alpha f\left( t \right) =&\hspace{-6pt} \sum\nolimits_{k = n}^{ + \infty } {\frac{{\Gamma \left( {n} \right)}}{{\Gamma \left( {k + 1} \right)\Gamma \left( {n-k} \right)}}\frac{{{f^{\left( k \right)}}\left( t \right)}}{{\Gamma \left( {k-n+2} \right)}}{{\left( {t - a} \right)}^{k-n+1}}} \\
 = &\hspace{-6pt}  0
\end{array}
\end{equation}
which is different from another validated conclusion
\begin{equation}\label{Eq24}
\begin{array}{rl}
\mathop {\lim }\limits_{\alpha  \to \left( {n - 1} \right) + } {}_a^{\rm{C}}{\mathscr D}_t^\alpha f\left( t \right)=&\hspace{-6pt} \sum\nolimits_{k = n}^{+\infty}  {\frac{{{f^{\left( k \right)}}\left( a \right)}}{{\Gamma \left( { k-n+2 } \right)}}} {\left( {t - a} \right)^{k-n+1 }}\\
=&\hspace{-6pt}{f^{\left( {n - 1} \right)}}\left( t \right) - {f^{\left( {n - 1} \right)}}\left( a \right)\not\equiv0.
\end{array}
\end{equation}
The incorrectness of (\ref{Eq21}) is thus concluded.

\begin{itemize}
  \item Contradict oneself.
\end{itemize}

Defining $h(t)=1$ for any $t\ge a$, one has
\begin{equation}\label{Eq25}
\hspace{-6pt}\begin{array}{l}
{}_a^{\rm{C}}{\mathscr D}_t^\alpha \left\{ {f\left( t \right)g\left( t \right)} \right\}\\
= {}_a^{\rm{C}}{\mathscr D}_t^\alpha \left\{ {\left[ {f\left( t \right)g\left( t \right)} \right]h\left( {t } \right)} \right\}\\
= \sum\nolimits_{k = 0}^{ + \infty } {\big( {\begin{smallmatrix}
\alpha \\
k
\end{smallmatrix}} \big)\frac{{{{\rm{d}}^k}}}{{{\rm{d}}{t^k}}}\left\{ {f\left( t \right)g\left( t \right)} \right\}{}_a^{\rm{C}}{\mathscr D}_t^{\alpha  - k}h\left( {t } \right)} \\
= \sum\nolimits_{k = n}^{ + \infty } {\big( {\begin{smallmatrix}
\alpha \\
k
\end{smallmatrix}} \big)\sum\nolimits_{j = 0}^k {\big( {\begin{smallmatrix}
k\\
j
\end{smallmatrix}} \big){f^{\left( j \right)}}\left( t \right){g^{\left( {k - j} \right)}}\left( t \right)} {}_a^{\rm{C}}{\mathscr D}_t^{\alpha  - k}h\left( {t } \right)} \\
\not\equiv\sum\nolimits_{k = 0}^{ + \infty } {\big( {\begin{smallmatrix}
\alpha \\
k
\end{smallmatrix}} \big)\sum\nolimits_{j = 0}^k {\big( {\begin{smallmatrix}
k\\
j
\end{smallmatrix}} \big){f^{\left( j \right)}}\left( t \right){g^{\left( {k - j} \right)}}\left( t \right)} {}_a^{\rm{C}}{\mathscr D}_t^{\alpha  - k}h\left( {t } \right)} \\
=\sum\nolimits_{j = 0}^{ + \infty } {{f^{\left( j \right)}}\left( t \right)\sum\nolimits_{k = j}^{ + \infty } {\big( {\begin{smallmatrix}
\alpha \\
k
\end{smallmatrix}} \big)\big( {\begin{smallmatrix}
k\\
j
\end{smallmatrix}} \big){g^{\left( {k - j} \right)}}\left( t \right)} {}_a^{\rm{C}}{\mathscr D}_t^{\alpha  - k}h\left( {t } \right)} \\
 =\sum\nolimits_{j = 0}^{ + \infty } {{f^{\left( j \right)}}\left( t \right)\sum\nolimits_{i = 0}^{ + \infty } {\big( {\begin{smallmatrix}
\alpha \\
{i + j}
\end{smallmatrix}} \big)\big( {\begin{smallmatrix}
{i + j}\\
j
\end{smallmatrix}} \big){g^{\left( i \right)}}\left( t \right)} {}_a^{\rm{C}}{\mathscr D}_t^{\alpha  - j - i}h\left( {t } \right)} \\
 =\sum\nolimits_{j = 0}^{ + \infty } {\left( {\begin{smallmatrix}
\alpha \\
j
\end{smallmatrix}} \right){f^{\left( j \right)}}\left( t \right)\sum\nolimits_{i = 0}^{ + \infty } {\left( {\begin{smallmatrix}
{\alpha  - j}\\
i
\end{smallmatrix}} \right){g^{\left( i \right)}}\left( t \right)} {}_a^{\rm{C}}{\mathscr D}_t^{\alpha  - j - i}h\left( {t } \right)} \\
 = \sum\nolimits_{j = 0}^{ + \infty } {\left( {\begin{smallmatrix}
\alpha \\
j
\end{smallmatrix}} \right){f^{\left( j \right)}}\left( t \right){}_a^{\rm{C}}{\mathscr D}_t^{\alpha  - j}g\left( t \right)},
\end{array}
\end{equation}
which shows the self-contradictory problem of formula (\ref{Eq21}). For $j>n$, ${}_a^{\rm{C}}{\mathscr D}_t^{\alpha  - j}$ is calculated by the way of the Riemann--Liouville integral, although there is no reason for this assumption. Apart from this queer assumption, another flaw is that the initial step $k$ equals to $n$ not $0$.
\end{proof}

Now that the well-known Leibniz rule in (\ref{Eq21}) for Caputo derivative is incorrect, it is important and interesting to derive the correct one. With the help of the correct Leibniz rule in Lemma \ref{Lemma 2} and the relation between Caputo derivative and Riemann--Liouville derivative (\ref{Eq5}), a right result can be arrived immediately.
\begin{theorem}[Leibniz rule of Caputo derivative]\label{Theorem 3}
For any constant $\alpha\in(n-1,n),~n\in\mathbb{N}_+$, if $f(t)$ can be expressed as a Taylor series and $g(t)$ is $n$-th continuously differentiable, one has
\begin{equation}\label{Eq26}
\begin{array}{l}
{}_a^{\rm{C}}{\mathscr D}_t^\alpha \left\{ {f\left( t \right)g\left( t \right)} \right\}
= \sum\nolimits_{j = 0}^{ + \infty } {\left( {\begin{smallmatrix}
\alpha \\
j
\end{smallmatrix}} \right){f^{\left( j \right)}}\left( t \right){}_a^{\rm{C}}{\mathscr D}_t^{\alpha  - j}g\left( t \right)} +R_1,
\end{array}
\end{equation}
and if $g(t)$ can be expressed as a Taylor series and $f(t)$ is $n$-th continuously differentiable, similar result follows
\begin{equation}\label{Eq27}
\begin{array}{l}
{}_a^{\rm{C}}{\mathscr D}_t^\alpha \left\{ {f\left( t \right)g\left( t \right)} \right\}
= \sum\nolimits_{j = 0}^{ + \infty } {\left( {\begin{smallmatrix}
\alpha \\
j
\end{smallmatrix}} \right){g^{\left( j \right)}}\left( t \right){}_a^{\rm{C}}{\mathscr D}_t^{\alpha  - j}f\left( t \right)} +R_2,
\end{array}
\end{equation}
where \[
\begin{array}{l}
R_1= \sum\nolimits_{k = 0}^{n - 1} {\sum\nolimits_{j = 0}^k {\left[ {\left( {\begin{smallmatrix}
\alpha \\
j
\end{smallmatrix}} \right){f^{\left( j \right)}}\left( t \right) - \left( {\begin{smallmatrix}
k\\
j
\end{smallmatrix}} \right){f^{\left( j \right)}}\left( a \right)} \right]\frac{{{g^{\left( {k - j} \right)}}\left( a \right){{\left( {t - a} \right)}^{k - \alpha }}}}{{\Gamma \left( {k + 1 - \alpha } \right)}}} },\\
R_2= \sum\nolimits_{k = 0}^{n - 1} {\sum\nolimits_{j = 0}^k {\left[ {\left( {\begin{smallmatrix}
\alpha \\
j
\end{smallmatrix}} \right){g^{\left( j \right)}}\left( t \right) - \left( {\begin{smallmatrix}
k\\
j
\end{smallmatrix}} \right){g^{\left( j \right)}}\left( a \right)} \right]\frac{{{f^{\left( {k - j} \right)}}\left( a \right){{\left( {t - a} \right)}^{k - \alpha }}}}{{\Gamma \left( {k + 1 - \alpha } \right)}}} }.
\end{array}\]

\end{theorem}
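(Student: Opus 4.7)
The plan is to derive (\ref{Eq26}) by applying the Riemann--Liouville Leibniz rule of Lemma~\ref{Lemma 2} and then converting every Riemann--Liouville symbol that appears into its Caputo counterpart using formula (\ref{Eq5}); (\ref{Eq27}) will then follow by the symmetric argument swapping the roles of $f$ and $g$. Concretely, I would start from
\[
{}_a^{\rm R}{\mathscr D}_t^\alpha\{f(t)g(t)\}=\sum\nolimits_{j=0}^{+\infty}\bigl(\begin{smallmatrix}\alpha\\ j\end{smallmatrix}\bigr)f^{(j)}(t)\,{}_a^{\rm R}{\mathscr D}_t^{\alpha-j}g(t),
\]
which is valid because $f$ admits a Taylor expansion at $t$. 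Applying (\ref{Eq5}) with order $\alpha$ and the Leibniz expansion $[f(a)g(a)]^{(k)}=\sum_{j=0}^{k}\bigl(\begin{smallmatrix}k\\ j\end{smallmatrix}\bigr)f^{(j)}(a)g^{(k-j)}(a)$ to the left-hand side produces a Caputo derivative of $fg$ plus the explicit correction
\[
\sum\nolimits_{k=0}^{n-1}\sum\nolimits_{j=0}^{k}\bigl(\begin{smallmatrix}k\\ j\end{smallmatrix}\bigr)\frac{f^{(j)}(a)g^{(k-j)}(a)}{\Gamma(k+1-\alpha)}(t-a)^{k-\alpha}.
\]

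Next I would split the right-hand side sum at $j=n$. For $j\ge n$ the order $\alpha-j$ is negative, so ${}_a^{\rm R}{\mathscr D}_t^{\alpha-j}g(t)={}_a^{\rm R}{\mathscr I}_t^{j-\alpha}g(t)$, which is taken as the definition of ${}_a^{\rm C}{\mathscr D}_t^{\alpha-j}g(t)$ in this range; no correction arises from these terms. For $0\le j\le n-1$ the shifted order $\alpha-j\in(n-j-1,n-j)$ is a genuine fractional derivative of integer depth $n-j$, so (\ref{Eq5}) gives
\[
{}_a^{\rm R}{\mathscr D}_t^{\alpha-j}g(t)={}_a^{\rm C}{\mathscr D}_t^{\alpha-j}g(t)+\sum\nolimits_{l=0}^{n-j-1}\frac{g^{(l)}(a)}{\Gamma(l+j+1-\alpha)}(t-a)^{l+j-\alpha}.
\]
Substituting and collecting the Caputo terms yields $\sum_{j=0}^{+\infty}\bigl(\begin{smallmatrix}\alpha\\ j\end{smallmatrix}\bigr)f^{(j)}(t)\,{}_a^{\rm C}{\mathscr D}_t^{\alpha-j}g(t)$, plus a double sum of boundary-value contributions involving $f^{(j)}(t)$ and $g^{(l)}(a)$.

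The main bookkeeping step is to reconcile this double sum with the correction from the left-hand side. I would reindex the surviving right-hand side terms by setting $k=l+j$, so the pair $(j,l)$ with $j\in[0,n-1]$, $l\in[0,n-j-1]$ becomes $(k,j)$ with $k\in[0,n-1]$, $j\in[0,k]$; this rewrites the boundary contribution as
\[
\sum\nolimits_{k=0}^{n-1}\sum\nolimits_{j=0}^{k}\bigl(\begin{smallmatrix}\alpha\\ j\end{smallmatrix}\bigr)f^{(j)}(t)\frac{g^{(k-j)}(a)}{\Gamma(k+1-\alpha)}(t-a)^{k-\alpha}.
\]
Moving the left-hand side correction across the equality and combining the two double sums term by term gives exactly $R_1$ as stated. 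The same argument with $f$ and $g$ interchanged (using the symmetric form (\ref{Eq13}) of the Riemann--Liouville Leibniz rule) yields (\ref{Eq27}) with $R_2$.

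The subtle point, and the step most likely to require care, is the region $j\ge n$: one must check that extending the definition ${}_a^{\rm C}{\mathscr D}_t^{\alpha-j}g\triangleq{}_a^{\rm R}{\mathscr I}_t^{j-\alpha}g$ is consistent with (\ref{Eq5}) and produces no additional boundary terms, so that the residual $R_1$ is supported only on $k=0,\dots,n-1$. A secondary bookkeeping concern is verifying that the regularity hypothesis on $g$ ($n$-times continuously differentiable) is exactly what is needed to make $[f(a)g(a)]^{(k)}$ and $g^{(k-j)}(a)$ meaningful for $0\le k\le n-1$, and that the Taylor-expansion hypothesis on $f$ at $t$ (rather than at $a$) is what legitimises applying Lemma~\ref{Lemma 2} in the first place.
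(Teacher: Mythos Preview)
Your proposal is correct and follows essentially the same route as the paper: both apply (\ref{Eq5}) to pass from ${}_a^{\rm R}{\mathscr D}_t^\alpha\{fg\}$ to ${}_a^{\rm C}{\mathscr D}_t^\alpha\{fg\}$, invoke Lemma~\ref{Lemma 2}, convert each ${}_a^{\rm R}{\mathscr D}_t^{\alpha-j}g$ with $0\le j\le n-1$ to ${}_a^{\rm C}{\mathscr D}_t^{\alpha-j}g$ via (\ref{Eq5}) again, and then reindex $(j,l)\mapsto(k,j)$ with $k=l+j$ to combine the two correction sums into $R_1$. The paper performs exactly your bookkeeping step (it writes $\sum_{j=0}^{n-1}\sum_{k=j}^{n-1}=\sum_{k=0}^{n-1}\sum_{j=0}^{k}$) and makes the same identification ${}_a^{\rm C}{\mathscr D}_t^{\alpha-j}g\triangleq{}_a^{\rm R}{\mathscr I}_t^{j-\alpha}g$ for $j\ge n$.
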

\begin{proof} Applying Lemma \ref{Lemma 2} and (\ref{Eq5}) yields
\begin{equation}\label{Eq28}
\begin{array}{l}
{}_a^{\rm C}{\mathscr D}_t^\alpha \left\{ {f\left( t \right)g\left( t \right)} \right\}\\
 = {}_a^{\rm R}{\mathscr D}_t^\alpha \left\{ {f\left( t \right)g\left( t \right)} \right\} - \sum\nolimits_{k = 0}^{n - 1} {\frac{{{{\rm d}^k}}}{{{\rm d}{t^k}}}{{\left. {\left\{ {f\left( t \right)g\left( t \right)} \right\}} \right|}_{t = a}}} \frac{{{{\left( {t - a} \right)}^{k - \alpha }}}}{{\Gamma \left( {k + 1 - \alpha } \right)}}\\
 = \sum\nolimits_{j = 0}^{ + \infty } {\left( {\begin{smallmatrix}
\alpha \\
j
\end{smallmatrix}} \right){f^{\left( j \right)}}\left( t \right){}_a^{\rm R}{\mathscr D}_t^{\alpha  - j}g\left( t \right)} \\
\hspace{10pt}- \sum\nolimits_{k = 0}^{n - 1} {\sum\nolimits_{j = 0}^k {\left( {\begin{smallmatrix}
k\\
j
\end{smallmatrix}} \right)} \frac{{{f^{\left( j \right)}}\left( a \right){g^{\left( {k - j} \right)}}\left( a \right)}}{{\Gamma \left( {k + 1 - \alpha } \right)}}} {\left( {t - a} \right)^{k - \alpha }}\\
=\sum\nolimits_{j = 0}^{ + \infty } {\left( {\begin{smallmatrix}
\alpha \\
j
\end{smallmatrix}} \right){f^{\left( j \right)}}\left( t \right){}_a^{\rm C}{\mathscr D}_t^{\alpha  - j}g\left( t \right)} \\
\hspace{10pt}+ \sum\nolimits_{j = 0}^{ + \infty} {\left( {\begin{smallmatrix}
\alpha \\
j
\end{smallmatrix}} \right){f^{\left( j \right)}}\left( t \right)\sum\nolimits_{k = 0}^{n - 1 - j} {\frac{{{g^{\left( k \right)}}\left( a \right)}}{{\Gamma \left( {k + j + 1 - \alpha } \right)}}} {{\left( {t - a} \right)}^{k + j - \alpha }}} \\
\hspace{10pt} - \sum\nolimits_{k = 0}^{n - 1} {\sum\nolimits_{j = 0}^k {\left( {\begin{smallmatrix}
k\\
j
\end{smallmatrix}} \right)} \frac{{{f^{\left( j \right)}}\left( a \right){g^{\left( {k - j} \right)}}\left( a \right)}}{{\Gamma \left( {k + 1 - \alpha } \right)}}} {\left( {t - a} \right)^{k - \alpha }}\\
 = \sum\nolimits_{j = 0}^{ + \infty } {\left( {\begin{smallmatrix}
\alpha \\
j
\end{smallmatrix}} \right){f^{\left( j \right)}}\left( t \right){}_a^{\rm C}{\mathscr D}_t^{\alpha  - j}g\left( t \right)} \\
\hspace{10pt} + \sum\nolimits_{j = 0}^{n - 1} {\left( {\begin{smallmatrix}
\alpha \\
j
\end{smallmatrix}} \right){f^{\left( j \right)}}\left( t \right)\sum\nolimits_{k = 0}^{n - 1 - j} {\frac{{{g^{\left( k \right)}}\left( a \right)}}{{\Gamma \left( {k + j + 1 - \alpha } \right)}}} {{\left( {t - a} \right)}^{k + j - \alpha }}} \\
\hspace{10pt} - \sum\nolimits_{k = 0}^{n - 1} {\sum\nolimits_{j = 0}^k {\left( {\begin{smallmatrix}
k\\
j
\end{smallmatrix}} \right)} \frac{{{f^{\left( j \right)}}\left( a \right){g^{\left( {k - j} \right)}}\left( a \right)}}{{\Gamma \left( {k + 1 - \alpha } \right)}}} {\left( {t - a} \right)^{k - \alpha }}\\
 = \sum\nolimits_{j = 0}^{ + \infty } {\left( {\begin{smallmatrix}
\alpha \\
j
\end{smallmatrix}} \right){f^{\left( j \right)}}\left( t \right){}_a^{\rm C}{\mathscr D}_t^{\alpha  - j}g\left( t \right)} \\
\hspace{10pt} + \sum\nolimits_{j = 0}^{n - 1} {\sum\nolimits_{k = j}^{n - 1} {\left( {\begin{smallmatrix}
\alpha \\
j
\end{smallmatrix}} \right)\frac{{{f^{\left( j \right)}}\left( t \right){g^{\left( {k - j} \right)}}\left( a \right)}}{{\Gamma \left( {j + 1 - \alpha } \right)}}{{\left( {t - a} \right)}^{j - \alpha }}} } \\
\hspace{10pt} - \sum\nolimits_{k = 0}^{n - 1} {\sum\nolimits_{j = 0}^k {\left( {\begin{smallmatrix}
k\\
j
\end{smallmatrix}} \right)} \frac{{{f^{\left( j \right)}}\left( a \right){g^{\left( {k - j} \right)}}\left( a \right)}}{{\Gamma \left( {k + 1 - \alpha } \right)}}} {\left( {t - a} \right)^{k - \alpha }}\\
 = \sum\nolimits_{j = 0}^{ + \infty } {\left( {\begin{smallmatrix}
\alpha \\
j
\end{smallmatrix}} \right){f^{\left( j \right)}}\left( t \right){}_a^{\rm C}{\mathscr D}_t^{\alpha  - j}g\left( t \right)} \\
\hspace{10pt} + \sum\nolimits_{k = 0}^{n - 1} {\sum\nolimits_{j = 0}^k {\left[ {\left( {\begin{smallmatrix}
\alpha \\
j
\end{smallmatrix}} \right){f^{\left( j \right)}}\left( t \right) - \left( {\begin{smallmatrix}
k\\
j
\end{smallmatrix}} \right){f^{\left( j \right)}}\left( a \right)} \right]\frac{{{g^{\left( {k - j} \right)}}\left( a \right){{\left( {t - a} \right)}^{k - \alpha }}}}{{\Gamma \left( {k + 1 - \alpha } \right)}}} } ,
\end{array}
\end{equation}
where $\sum\nolimits_{j = 0}^{ n-1 } {\sum\nolimits_{k = j}^{ n-1 } {} }=\sum\nolimits_{k = 0}^{ n-1 } {\sum\nolimits_{j = 0}^k }$ is adopted.

With the interchangeability of the product operation, it follows
\begin{equation}\label{Eq29}
{}_a^{\rm{C}}{\mathscr D}_t^\alpha \left\{ {f\left( t \right)g\left( t \right)} \right\}={}_a^{\rm{C}}{\mathscr D}_t^\alpha \left\{ {g\left( t \right)f\left( t \right)} \right\},
\end{equation}
for all possible functions $f(t)$, $g(t)$ and the order $n-1<\alpha<n\in\mathbb{N}_+$. Combining with (\ref{Eq26}) and (\ref{Eq29}), (\ref{Eq27}) can be concluded immediately. The proof is thus completed.
\end{proof}

\begin{remark}\label{Remark 2}
Notably, Theorem \ref{Theorem 3} indicates that Leibniz rule in (\ref{Eq21}) does not always hold, even when ${}_a^{\rm{C}}{\mathscr D}_t^{\alpha  - j}g\left( t \right)={}_a^{\rm{R}}{\mathscr I}_t^{j-\alpha}g\left( t \right)$ is assumed for $j > n$. Two available Leibniz rules for Caputo derivativea are developed via the series representation method and they can also be established with the help of (\ref{Eq10}) and (\ref{Eq11}). When the range of $\alpha$ is set in $(0,1)$, the proposed formula (\ref{Eq26}) degenerates into Theorem 3.17 in \cite{Diethelm:2010Book}. From the previous discussion, (\ref{Eq26}) indicates the following results.
\begin{itemize}
  \item ${}_a^{\rm{C}}{\mathscr D}_t^\alpha \left\{ {f\left( t \right)g\left( t \right)} \right\}= \sum\nolimits_{j = 0}^{ + \infty } {\left( {\begin{smallmatrix}
\alpha \\
j
\end{smallmatrix}} \right){f^{\left( j \right)}}\left( t \right){}_a^{\rm{R}}{\mathscr D}_t^{\alpha  - j}g\left( t \right)}$ holds if $f^{(j)}(a)=0$ or $g^{(j)}(a)=0$ for all $j=0,1,\cdots,n-1$.
  \item ${}_a^{\rm{C}}{\mathscr D}_t^\alpha \left\{ {f\left( t \right)g\left( t \right)} \right\}= \sum\nolimits_{j = 0}^{ + \infty } {\left( {\begin{smallmatrix}
\alpha \\
j
\end{smallmatrix}} \right){f^{\left( j \right)}}\left( t \right){}_a^{\rm{C}}{\mathscr D}_t^{\alpha  - j}g\left( t \right)}$ holds if $g^{(j)}(a)=0$ for all $j=0,1,\cdots,n-1$.
\end{itemize}

Besides, the developed results can also be extended to the $\Psi$-H fractional derivative case like \cite{Sousa:2018arXiv}. Compared with the results in Lemma 3 of \cite{Ishteva:2005MSRJ}, our results are more prompt and practical.
\end{remark}

\subsection{Laplace Transform}

With the help of the infinite series in Lemma \ref{Lemma 1}, some results on Laplace transform can be double checked.

Let us start with the elementary unit $t^\mu$, as
\begin{equation}\label{Eq30}
\begin{array}{rl}
{\mathscr L}\left\{ {{t^\mu }} \right\} = &\hspace{-6pt}\int_0^{{\rm{ + }}\infty } {{{\rm{e}}^{ - st}}{t^\mu }{\rm{d}}t} \\
 =&\hspace{-6pt} \frac{1}{{{s^{\mu  + 1}}}}\int_0^{ + \infty } {{{\rm{e}}^{ - st}}{{\left( {st} \right)}^\mu }{\rm{d}}\left( {st} \right)} \\
 =&\hspace{-6pt} \frac{1}{{{s^{\mu  + 1}}}}\int_0^{ + \infty } {{{\rm{e}}^{ - \tau }}{\tau ^\mu }{\rm{d}}\tau } \\
 =&\hspace{-6pt} \frac{{\Gamma \left( {\mu  + 1} \right)}}{{{s^{\mu  + 1}}}},
\end{array}
\end{equation}
in which the definition $\Gamma (z)  \triangleq  \int_0^{{\rm{ + }}\infty } {{x^{z - 1}}{{\rm{e}}^{ - x}}\,{\rm{d}}x} $ is adopted. Actually, the corresponding result for $\mu\in\mathbb{N}$ can be obtained via the formula of integration by parts. The range of $\mu$ should be limited as $\mu>-1$ since the integral definition of Gamma function is only suitable for positive argument. From the integrability of $t^\mu$, one has
\begin{equation}\label{Eq31}
{\textstyle
\begin{array}{rl}
{}_0^{\rm{R}}{\mathscr I}_t^\alpha {t^\mu } =&\hspace{-6pt} \frac{1}{{\Gamma \left( \alpha  \right)}}\int_0^t {{{\left( {t - \tau } \right)}^{\alpha  - 1}}{\tau ^\alpha }{\rm{d}}\tau } \\
 =&\hspace{-6pt} \frac{1}{{\Gamma \left( \alpha  \right)}}\int_0^1 {{t^{\alpha  - 1}}{{\left( {1 - u} \right)}^{\alpha  - 1}}{t^\mu }{u^\mu }t{\rm{d}}u} \\
 =&\hspace{-6pt} \frac{{{t^{\alpha  + \mu }}}}{{\Gamma \left( \alpha  \right)}}\int_0^1 {{u^\mu }{{\left( {1 - u} \right)}^{\alpha  - 1}}{\rm{d}}u} \\
 =&\hspace{-6pt} \frac{{{t^{\alpha  + \mu }}}}{{\Gamma \left( \alpha  \right)}}{\mathcal B}\left( {\mu  + 1,\alpha } \right)\\
 =&\hspace{-6pt} \frac{{\Gamma \left( {\mu  + 1} \right)}}{{\Gamma \left( {\alpha  + \mu  + 1} \right)}}{t^{\alpha  + \mu }},
\end{array}}
\end{equation}
where the integral formula of Beta function holds only for positive arguments, i.e., $\mu+1>0$ and $\alpha>0$.

Without loss of generality, let us suppose $\mu\in(-2,-1)$, then $-1<\mu+1<0$. From the differential property of Laplace transform, it follows
\begin{equation}\label{Eq32}
{\textstyle
\begin{array}{rl}
{\mathscr L}\left\{ {{t^\mu }} \right\} =&\hspace{-6pt} {\mathscr L}\big\{ {\frac{{\rm{d}}}{{{\rm{d}}t}}\frac{{{t^{\mu  + 1}}}}{{\mu  + 1}}} \big\}\\
 =&\hspace{-6pt} s{\mathscr L}\big\{ {\frac{{{t^{\mu  + 1}}}}{{\mu  + 1}}} \big\} - { {\frac{{{t^{\mu  + 1}}}}{{\mu  + 1}}} \big|_{t = 0}}\\
 =&\hspace{-6pt} \frac{{\Gamma \left( {\mu  + 1} \right)}}{{{s^{\mu  + 1}}}} - \infty,
\end{array}}
\end{equation}
which indicates that the Laplace transform of $t^\mu$ with $\mu<-1$ does not exist in the classical sense. This result might bring the singularity of ${\mathscr L}\left\{ {{}_0^{\rm R}{\mathscr D}_t^\alpha f\left( t \right)} \right\}$ for an analytical function $f(\cdot)$ with $\alpha>1$.

If one defines $F\left( s \right) \triangleq {\mathscr L}\left\{ {f\left( t \right)} \right\}$, where $f(t)$ can be expressed as a Taylor series, according to the definition, one has
\begin{equation}\label{Eq33}
\begin{array}{rl}
F\left( s \right) =&\hspace{-6pt} \int_0^{{\rm{ + }}\infty } {{{\rm{e}}^{ - st}}f\left( t \right){\rm{d}}t} \\
 =&\hspace{-6pt} \int_0^{{\rm{ + }}\infty } {{{\rm{e}}^{ - st}}\sum\nolimits_{k = 0}^{ + \infty } {\frac{{{f^{\left( k \right)}}\left( 0 \right)}}{{k!}}{t^k}} {\rm{d}}t} \\
 =&\hspace{-6pt} \sum\nolimits_{k = 0}^{ + \infty } {\frac{{{f^{\left( k \right)}}\left( 0 \right)}}{{k!}}\int_0^{{\rm{ + }}\infty } {{{\rm{e}}^{ - st}}{t^k}{\rm{d}}t} } \\
 =&\hspace{-6pt} \sum\nolimits_{k = 0}^{ + \infty } {\frac{{{f^{\left( k \right)}}\left( 0 \right)}}{{{s^{k + 1}}}}},
\end{array}
\end{equation}
where $f(t)$ can be expressed as a Taylor series at $0$.

From these discussions, the following results can be obtained
\begin{equation}\label{Eq34}
\begin{array}{rl}
{\mathscr L}\left\{ {{}_0^{\rm R}{\mathscr I}_t^\alpha f\left( t \right)} \right\} =&\hspace{-6pt} \int_0^{{\rm{ + }}\infty } {{{\rm{e}}^{ - st}}\sum\nolimits_{k = 0}^{ + \infty } {\frac{{{f^{\left( k \right)}}\left( 0 \right)}}{{\Gamma \left( {k + 1 + \alpha } \right)}}} {t^{k + \alpha }}{\rm{d}}t} \\
 =&\hspace{-6pt} \sum\nolimits_{k = 0}^{ + \infty } {\frac{{{f^{\left( k \right)}}\left( 0 \right)}}{{\Gamma \left( {k + 1 + \alpha } \right)}}\int_0^{+\infty } {{{\rm{e}}^{ - st}}{t^{k + \alpha }}{\rm{d}}t} } \\
 =&\hspace{-6pt} \sum\nolimits_{k = 0}^{ + \infty } {\frac{{{f^{\left( k \right)}}\left( 0 \right)}}{{{s^{k + 1+ \alpha}}}}} \\
 =&\hspace{-6pt} {s^{ - \alpha }}F\left( s \right),
\end{array}
\end{equation}
for $\alpha>0$ and
\begin{equation}\label{Eq35}
\begin{array}{rl}
{\mathscr L}\left\{ {{}_0^{\rm{C}}{\mathscr D}_t^\alpha f\left( t \right)} \right\} =&\hspace{-6pt} \int_0^{{\rm{ + }}\infty } {{{\rm{e}}^{ - st}}\sum\nolimits_{k = n}^{ + \infty } {\frac{{{f^{\left( k \right)}}\left( 0 \right)}}{{\Gamma \left( {k + 1 - \alpha } \right)}}} {t^{k - \alpha }}{\rm{d}}t} \\
 =&\hspace{-6pt}{\sum\nolimits_{k = n}^{ + \infty } {\frac{{{f^{\left( k \right)}}\left( 0 \right)}}{{{s^{k + 1 - \alpha }}}}} }\\
 =&\hspace{-6pt}{s^\alpha }\big[ {F\left( s \right) - \sum\nolimits_{k = 0}^{n - 1} {\frac{{{f^{\left( k \right)}}\left( 0 \right)}}{{{s^{k + 1 - \alpha }}}}} } \big] \\
 =&\hspace{-6pt} {s^\alpha }F\left( s \right) - \sum\nolimits_{k = 0}^{n - 1} {{s^{\alpha  - k - 1}}{f^{\left( k \right)}}\left( 0 \right)},
\end{array}
\end{equation}
for $n-1<\alpha<n\in\mathbb{N}_+$. Likewise, ${\mathscr L}\left\{ {\frac{{{{\rm{d}}^n}}}{{{\rm{d}}{t^n}}} f\left( t \right)} \right\}$ can also be derived in a similar way. Actually, when the infinitely differentiable condition of $f(t)$ at $t=0$ is removed, (\ref{Eq34}) and (\ref{Eq35}) still hold (see (2.242) and (2.253) of \cite{Podlubny:1999Book}).

Consider the differential property in frequency, one has
\begin{equation}\label{Eq36}
{\textstyle \begin{array}{rl}
\frac{{{{\rm{d}}^m}}}{{{\rm{d}}{s^m}}}F\left( s \right) =&\hspace{-6pt} \frac{{{{\rm{d}}^m}}}{{{\rm{d}}{s^m}}}\sum\nolimits_{k = 0}^{ + \infty } {\frac{{{f^{\left( k \right)}}\left( 0 \right)}}{{{s^{k + 1}}}}} \\
 =&\hspace{-6pt} \sum\nolimits_{k = 0}^{ + \infty } {\frac{{{f^{\left( k \right)}}\left( 0 \right)\Gamma \left( { - k} \right)}}{{{s^{k + m + 1}}\Gamma \left( { - k - m} \right)}}} \\
 =&\hspace{-6pt} \sum\nolimits_{k = 0}^{ + \infty } {{{\left( { - 1} \right)}^m}\frac{{{f^{\left( k \right)}}\left( 0 \right)\Gamma \left( {k + m + 1} \right)}}{{{s^{k + m + 1}}\Gamma \left( {k + 1} \right)}}},
\end{array}}
\end{equation}
for any $m\in\mathbb{N}$. From the representation of Riemann--Liouville integral in Lemma \ref{Lemma 1}, it follows
\begin{equation}\label{Eq37}
{\textstyle \begin{array}{rl}
{}_0^{\rm{R}}{\mathscr I}_t^\alpha \left\{f\left( t \right) {{{\left( { - t} \right)}^m}} \right\} =&\hspace{-6pt} {\left( { - 1} \right)^m}_0^{\rm{R}}{\mathscr I}_t^\alpha \big\{ {\sum\nolimits_{k = 0}^{ + \infty } {\frac{{{f^{\left( k \right)}}\left( 0 \right)}}{{k!}}} {t^{k + m}}} \big\}\\
 =&\hspace{-6pt} {\left( { - 1} \right)^m}\sum\nolimits_{k = 0}^{ + \infty } {\frac{{{f^{\left( k \right)}}\left( 0 \right)\Gamma \left( {k + m + 1} \right)}}{{k!\Gamma \left( {k + m + \alpha  + 1} \right)}}} {t^{k + m + \alpha }}.
\end{array}}
\end{equation}

Combing the above discussions in (\ref{Eq36}) and (\ref{Eq37}), a beautiful and practical result can be reached.

\begin{corollary}\label{Corollary 2}
For any $\alpha\in\mathbb{R}_+$ and $m\in\mathbb{N}$, defining $F\left( s \right) \triangleq {\mathscr L}\left\{ {f\left( t \right)} \right\}$, one has
\begin{equation}\label{Eq38}
{\textstyle
{{\mathscr L}^{ - 1}}\left\{ {\frac{1}{{{s^\alpha }}}{F^{\left( m \right)}}\left( s \right)} \right\} = {}_0^{\rm{R}}{\mathscr I}_t^\alpha \left\{f\left( t \right) {{{\left( { - t} \right)}^m}} \right\}}.
\end{equation}
\end{corollary}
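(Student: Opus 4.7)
The plan is to directly chain equations (\ref{Eq36}) and (\ref{Eq37}), using the identity $\mathscr{L}\{t^\mu\} = \Gamma(\mu+1)/s^{\mu+1}$ from (\ref{Eq30}) as the bridge. First I would take the series expansion of $F^{(m)}(s)$ given in (\ref{Eq36}) and divide it by $s^\alpha$, obtaining
\begin{equation*}
\frac{1}{s^\alpha} F^{(m)}(s) = (-1)^m \sum\nolimits_{k=0}^{+\infty} \frac{f^{(k)}(0)\,\Gamma(k+m+1)}{\Gamma(k+1)\, s^{k+m+\alpha+1}}.
\end{equation*}

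Next I would apply $\mathscr{L}^{-1}$ term-by-term. Each term is a negative power of $s$ of the form $s^{-(k+m+\alpha+1)}$, so the inverse Laplace transform identity $\mathscr{L}^{-1}\{s^{-\mu-1}\} = t^\mu/\Gamma(\mu+1)$ (with $\mu = k+m+\alpha > -1$) converts it into $t^{k+m+\alpha}/\Gamma(k+m+\alpha+1)$. After this substitution, the resulting series is exactly the right-hand side of (\ref{Eq37}), which equals ${}_0^{\rm R}\mathscr{I}_t^\alpha\{f(t)(-t)^m\}$ by Lemma \ref{Lemma 1}. Matching the two expressions yields the claimed identity.

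The main technical obstacle is justifying the interchange of $\mathscr{L}^{-1}$ with the infinite summation. This is implicit in the setup (we have already assumed in (\ref{Eq33}) that termwise Laplace transforms of the Taylor series of $f$ are permissible, and $s^{-\alpha}$ only shifts the exponents), so under the same uniform convergence hypothesis used for Lemma \ref{Lemma 1} and (\ref{Eq33}) the interchange is legitimate; I would simply remark on this rather than reprove it. With that caveat noted, the proof reduces to pattern-matching two already-computed series, and no new calculation beyond dividing by $s^\alpha$ and inverting power functions is needed.
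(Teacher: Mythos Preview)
Your proposal is correct and is essentially identical to the paper's own approach: the paper simply states that ``combining the above discussions in (\ref{Eq36}) and (\ref{Eq37})'' yields the corollary, and you have spelled out precisely that combination---divide (\ref{Eq36}) by $s^\alpha$, invert termwise via (\ref{Eq30}), and recognize the result as (\ref{Eq37}). The only difference is that you explicitly flag the termwise-inversion interchange, which the paper leaves implicit.
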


Now, let us consider the general case
\begin{equation}\label{Eq39}
\begin{array}{rl}
{\mathscr L}\left\{ {{\left(t-a\right)^\mu }} \right\} = &\hspace{-6pt}\int_0^{+\infty } {{{\rm{e}}^{ - st}}{\left(t-a\right)^\mu }{\rm{d}}t} \\
 =&\hspace{-6pt} {\rm e}^{-as}\int_{-a}^{ + \infty } {{{\rm{e}}^{ - s\xi}}{{\tau}^\xi }{\rm{d}}\xi} \\
 =&\hspace{-6pt} \frac{{\rm e}^{-as}}{{{s^{\mu  + 1}}}}\int_{-as}^{ + \infty } {{{\rm{e}}^{ - \tau }}{\tau ^\mu }{\rm{d}}\tau } \\
 =&\hspace{-6pt} \frac{{\rm e}^{-as}}{{{s^{\mu  + 1}}}}\Upsilon\left(\mu+1, -as\right),
\end{array}
\end{equation}
where $\Upsilon\left(p, q\right)\triangleq \int_{q}^{ + \infty } {\tau ^{p+1}}{{{\rm{e}}^{ -\tau }}{\rm{d}}\tau },p\in\mathbb{R}_+, q\in\mathbb{R}$ and $\Upsilon\left(p, 0\right)=\Gamma\left(p\right)$.

If $f(t)$ can be expressed as a Taylor series at $a$, then one has
\begin{equation}\label{Eq40}
{\textstyle {\mathscr L}\left\{ {f\left( t \right)} \right\} = \sum\nolimits_{k = 0}^{ + \infty } {\frac{{{f^{\left( k \right)}}\left( a \right)}}{{{s^{k + 1}}}}} \frac{{{{\rm{e}}^{ - as}}\Upsilon \left( {k + 1, - as} \right)}}{{\Gamma \left( {k + 1} \right)}},}
\end{equation}
\begin{equation}\label{Eq41}
{\textstyle {\mathscr L}\left\{ {{}_a^{\rm{R}}{\mathscr I}_t^\alpha f\left( t \right)} \right\} = \sum\nolimits_{k = 0}^{ + \infty } {\frac{{{f^{\left( k \right)}}\left( a \right)}}{{{s^{k + 1 - \alpha }}}}} \frac{{{{\rm{e}}^{ - as}}\Upsilon \left( {k + \alpha  + 1, - as} \right)}}{{\Gamma \left( {k + \alpha  + 1} \right)}},}
\end{equation}
\begin{equation}\label{Eq42}
{\textstyle {\mathscr L}\left\{ {{}_a^{\rm{C}}{\mathscr D}_t^\alpha f\left( t \right)} \right\} = \sum\nolimits_{k = n}^{ + \infty } {\frac{{{f^{\left( k \right)}}\left( a \right)}}{{{s^{k + 1 - \alpha }}}}} \frac{{{{\rm{e}}^{ - as}}\Upsilon \left( {k - \alpha  + 1, - as} \right)}}{{\Gamma \left( {k - \alpha  + 1} \right)}},}
\end{equation}
where $n-1<\alpha<n\in\mathbb{N}_+$. Note that in (\ref{Eq39})-(\ref{Eq42}), though the function $\Upsilon\left(p, q\right)$ is introduced to denote the corresponding results in a closed form, they are not as concise as the zero initial instant case. To solve this problem, the generalized Laplace transform can be introduced as
\begin{equation}\label{Eq43}
{\textstyle {{\mathscr L}_a}\left\{ {f\left( t \right)} \right\} \triangleq \int_a^{ + \infty } {{{\rm{e}}^{ - s\left( {t - a} \right)}}f\left( t \right){\rm{d}}t}.}
\end{equation}
With the help of (\ref{Eq43}), one has
\begin{equation}\label{Eq44}
{\textstyle {\mathscr L}_a\left\{ {{\left(t-a\right)^\mu }} \right\} = \frac{{\Gamma \left( {\mu  + 1} \right)}}{{{s^{\mu  + 1}}}},}
\end{equation}
\begin{equation}\label{Eq45}
{\textstyle F\left( s \right)={\mathscr L}_a\left\{ {f\left( t \right)} \right\} = \sum\nolimits_{k = 0}^{ + \infty } {\frac{{{f^{\left( k \right)}}\left( a \right)}}{{{s^{k + 1}}}}},}
\end{equation}
\begin{equation}\label{Eq46}
{\textstyle {\mathscr L}_a\left\{ {{}_a^{\rm{R}}{\mathscr I}_t^\alpha f\left( t \right)} \right\} = {s^{ - \alpha }}F\left( s \right),}
\end{equation}
\begin{equation}\label{Eq47}
{\textstyle {\mathscr L}_a\left\{ {{}_a^{\rm{C}}{\mathscr D}_t^\alpha f\left( t \right)} \right\} = {s^\alpha }F\left( s \right) - \sum\nolimits_{k = 0}^{n - 1} {{s^{\alpha  - k - 1}}{f^{\left( k \right)}}\left( a \right)}.}
\end{equation}
Without loss of generality, the initial instant $a$ is also assumed to be $0$ hereafter.

When $0<\alpha<1$, $k-\alpha>-1$ hold for all $k=0,1,\cdots$. Then one has
\begin{equation}\label{Eq48}
\begin{array}{rl}
{\mathscr L}\left\{ {{}_0^{\rm R}{\mathscr D}_t^\alpha f\left( t \right)} \right\} =& \hspace{-6pt} \int_0^{{\rm{ + }}\infty } {{{\rm{e}}^{ - st}}\sum\nolimits_{k = 0}^{ + \infty } {\frac{{{f^{\left( k \right)}}\left( 0 \right)}}{{\Gamma \left( {k + 1 - \alpha } \right)}}} {t^{k - \alpha }}{\rm{d}}t} \\
 =&\hspace{-6pt} \sum\nolimits_{k = 0}^{ + \infty } {\frac{{{f^{\left( k \right)}}\left( 0 \right)}}{{{s^{k + 1- \alpha}}}}} \\
 =&\hspace{-6pt} {s^{\alpha }}F\left( s \right),
\end{array}
\end{equation}
which matches the result in (2.256) of \cite{Podlubny:1999Book}.

Supposing $1<\alpha<2,~f(0)\neq0$ and applying (\ref{Eq32}), yields
\begin{equation}\label{Eq49}
\begin{array}{rl}
{\mathscr L}\left\{ {{}_0^{\rm R}{\mathscr D}_t^\alpha f\left( t \right)} \right\} =&\hspace{-6pt} \int_0^{{\rm{ + }}\infty } {{{\rm{e}}^{ - st}}\sum\nolimits_{k = 0}^{ + \infty } {\frac{{{f^{\left( k \right)}}\left( 0 \right)}}{{\Gamma \left( {k + 1 - \alpha } \right)}}} {t^{k - \alpha }}{\rm{d}}t} \\
 =&\hspace{-6pt} \sum\nolimits_{k = 0}^{ + \infty } {\frac{{{f^{\left( k \right)}}\left( 0 \right)}}{{{s^{k + 1- \alpha}}}}}-\infty \\
 =&\hspace{-6pt} {s^{\alpha }}F\left( s \right)-\infty,
\end{array}
\end{equation}
which means that in this case the Laplace transform of Riemann--Liouville derivative is singular. Actually, the similar results can be obtained for any $n-1<\alpha<n$, $n\in\mathbb{N}_+$ and $n>1$.  In other words, the $\alpha$-th Riemann--Liouville derivative of this function $f(t)$ does not exist in the classical sense when not all of $f^{(k)}(0)$ are equal to zero for $k=0,1,\cdots,n-1$.

Recall the famous Laplace transform of Riemann--Liouville derivative
\begin{equation}\label{Eq50}
{\textstyle
{\mathscr L}\hspace{-2pt}\left\{ {{}_0^{\rm{R}}{\mathscr D}_t^\alpha f\left( t \right)} \right\}\hspace{-1pt} =\hspace{-1pt} {s^\alpha }F\left( s \right)\hspace{-1pt} - \hspace{-3pt}\sum\nolimits_{k = 0}^{n - 1}\hspace{-1pt} {{s^k}{{ {{}_0^{\rm{R}}{\mathscr D}_t^{\alpha  - k - 1}f\left( t \right)} \big|}_{t = 0}}},}
\end{equation}
in which ${{ {{}_0^{\rm{R}}{\mathscr D}_t^{\alpha  - k - 1}f\left( t \right)} \big|}_{t = 0}}$ are the needed initial conditions, including one integral initial value and $n-1$ derivative initial values \cite{Oldham:1974Book}. Note that (\ref{Eq48}) and (\ref{Eq49}) conflict with (\ref{Eq50}). Next, we will find out how the differences come into being.

Looking at equations (\ref{Eq8}) and (\ref{Eq10}), one singular property can be found on the unit $t^{-\alpha}$ when $\alpha>0$. It cannot be expressed by the bases $1,~t,~t^2,\cdots$ with finite weighting, since $\mathop {\lim }\limits_{t \to 0 + } {t^{ - \alpha }} =  + \infty $ and $\mathop {\lim }\limits_{t \to 0 + } {t^k} = 0$, $k \in \mathbb{N}$, which just shows the singularity of Riemann--Liouville derivative. If we consider the behaviour of Riemann--Liouville fractional calculus near the lower terminal, i.e., $t\to 0+$, the description
\begin{equation}\label{Eq51}
{\textstyle\begin{array}{l}
\mathop {\lim }\limits_{t \to 0 + } {}_0^{\rm{R}}{\mathscr D}_t^\alpha f\left( t \right)
= \left\{ \begin{array}{ll}
0&,~\alpha  < 0~{\rm{and}}~f\left( 0 \right)\neq\infty,\\
f\left( 0 \right)&,~\alpha  = 0,\\
\pm\infty&,~\alpha  > 0,~\alpha  \notin \mathbb{N}~{\rm{and}}~f\left( 0 \right)\neq0.
\end{array}\right.
\end{array}}
\end{equation}
follows. It clearly illustrates that the initial conditions of fractional derivative are singular in the framework of integer order. If the function $f(t)$ can be expressed as a Taylor series, the finite $f^{(k)}(0),k=0,1,\cdots,n-1$ are implied for $n-1<\alpha<n\in\mathbb{N}_+$. For $\alpha\in(0,1)$, it follows ${{ {{}_0^{\rm{R}}{\mathscr D}_t^{\alpha   - 1}f\left( t \right)} \big|}_{t = 0}}={{ {{}_0^{\rm{R}}{\mathscr I}_t^{1-\alpha  }f\left( t \right)} \big|}_{t = 0}}=0$ and then (\ref{Eq50}) reduces to (\ref{Eq48}). For $\alpha\in(1,2)$, the initial value ${{ {{}_0^{\rm{R}}{\mathscr D}_t^{\alpha   - 2}f\left( t \right)} \big|}_{t = 0}}=0$ and the initial value ${{ {{}_0^{\rm{R}}{\mathscr D}_t^{\alpha   - 1}f\left( t \right)} \big|}_{t = 0}}=\infty$ unless $f(0)=0$. As a result, the singular case in (\ref{Eq49}) appears.

Furthermore, representing the finite initial value $f^{(k)}(0)=b_k,~k=0,1,\cdots,n-1$, then $F(s)$ can be expressed as
\begin{equation}\label{Eq52}
{\textstyle F\left( s \right) = \sum\nolimits_{k = 0}^{n - 1} {\frac{{{b_k}}}{{{s^{k + 1}}}}}  + O\left( {\frac{1}{{{s^{n+1}}}}} \right).}
\end{equation}

By using the result in (\ref{Eq51}), nonzero $b_k$ lead to the singular results
\begin{equation}\label{Eq53}
{\textstyle \mathop {\lim }\limits_{t \to 0} {}_0^{\rm{R}}{\mathscr D}_t^{\alpha  - n}f\left( t \right)  = 0,}
\end{equation}
\begin{equation}\label{Eq54}
{\textstyle \mathop {\lim }\limits_{t \to 0} {}_0^{\rm{R}}{\mathscr D}_t^{\alpha  - 1 - k}f\left( t \right) = \infty ,~k = 0,1, \cdots ,n - 2.}
\end{equation}

\begin{corollary}\label{Corollary 3}
If the function $f(t)$ is $n$-th continuously differentiable, then the following two conditions are equivalent.
\begin{equation}\label{Eq55}
{\textstyle {}_0^{\rm{R}}{\mathscr D}_t^{\alpha  - 1-k}f\left( t \right) = 0,~k = 0,1, \cdots ,n - 1,}
\end{equation}
\begin{equation}\label{Eq56}
{\textstyle f^{(k)}\left( t \right) = 0 ,~k = 0,1, \cdots ,n - 1,}
\end{equation}
where $n-1<\alpha<n\in\mathbb{N}_+$.
\end{corollary}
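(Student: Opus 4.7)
The plan is to leverage the series representation of Lemma~\ref{Lemma 1} together with the identity (\ref{Eq4}), recognising that under the stated hypothesis both conditions (\ref{Eq55}) and (\ref{Eq56}) simply amount to $f \equiv 0$ on the interval of interest. The easy half (\ref{Eq56})$\Rightarrow$(\ref{Eq55}) is then almost immediate: the instance $k = 0$ in (\ref{Eq56}) already gives $f \equiv 0$, and then every Riemann--Liouville fractional derivative or integral of $f$ vanishes identically, yielding (\ref{Eq55}) for each $k = 0, 1, \ldots, n-1$.

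The substantive step is the converse (\ref{Eq55})$\Rightarrow$(\ref{Eq56}). I would focus on the single instance $k = n-1$, since then $\alpha - 1 - (n - 1) = \alpha - n \in (-1, 0)$ and (\ref{Eq4}) recasts (\ref{Eq55}) as ${}_0^{\rm R}{\mathscr I}_t^{n - \alpha} f(t) \equiv 0$. From here the task reduces to showing that the Riemann--Liouville fractional integration operator of positive order $n - \alpha > 0$ is injective on continuous functions. I would do this in either of two equivalent ways: via the Laplace transform identity (\ref{Eq34}), which yields $s^{-(n - \alpha)} F(s) \equiv 0$ and hence $F(s) \equiv 0$, so that $f \equiv 0$ by uniqueness of the Laplace transform; or by applying the inverse operator ${}_0^{\rm R}{\mathscr D}_t^{n - \alpha}$ to both sides and invoking the left-inverse composition identity ${}_0^{\rm R}{\mathscr D}_t^{n - \alpha} \circ {}_0^{\rm R}{\mathscr I}_t^{n - \alpha} = \mathrm{Id}$. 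Either route gives $f \equiv 0$, after which $f^{(k)} \equiv 0$ for all $k = 0, 1, \ldots, n-1$ is automatic, establishing (\ref{Eq56}).

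The main subtlety is ensuring that the injectivity argument is valid under the stated hypothesis that $f$ is only $n$-times continuously differentiable rather than analytic (the setting of Lemma~\ref{Lemma 1}). Continuity of $f$ alone is in fact sufficient for both the Laplace transform route and the left-inverse composition identity to apply, so no regularity beyond what the corollary already provides is needed; the series representation of Lemma~\ref{Lemma 1} serves only to motivate the choice $k = n - 1$ as the one that strips all but the integral datum from the list of conditions in (\ref{Eq55}).
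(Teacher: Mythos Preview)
Your argument is sound for the corollary as literally written: reading (\ref{Eq55}) and (\ref{Eq56}) as identities in $t$, both collapse to $f\equiv 0$, and the injectivity of ${}_0^{\rm R}{\mathscr I}_t^{n-\alpha}$ (via the Laplace identity (\ref{Eq34}) or the left-inverse composition) handles the nontrivial direction cleanly. The paper supplies no separate proof of Corollary~\ref{Corollary 3}; it is offered as a consequence of the surrounding discussion in (\ref{Eq51})--(\ref{Eq54}) and the sentence that follows.

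That said, the paper's context strongly suggests the intended statement is about \emph{initial values at $t=0$}, not identical vanishing: the paragraph before the corollary sets $b_k=f^{(k)}(0)$, equations (\ref{Eq53})--(\ref{Eq54}) examine $\lim_{t\to 0}{}_0^{\rm R}{\mathscr D}_t^{\alpha-1-k}f(t)$, and the text immediately after begins ``For all zero $b_k$\ldots''. Under that reading the paper's implicit argument runs through the series (\ref{Eq8}) and the limiting behaviour (\ref{Eq51}): a nonzero $f^{(j)}(0)$ with $j\le n-2$ contributes a singular term $t^{j-(\alpha-1-k)}$ for suitable $k$, so finiteness (hence vanishing) of all the fractional initial values forces $f^{(j)}(0)=0$ for every $j\le n-1$, while the converse follows directly from (\ref{Eq8}). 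Your injectivity route does not transfer to this pointwise version---knowing only ${}_0^{\rm R}{\mathscr I}_t^{n-\alpha}f(t)\big|_{t=0}=0$ says nothing, since that limit is always zero by (\ref{Eq51})---so if the at-$t=0$ interpretation is the intended one you would need to switch to the series-based limiting argument.
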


For all zero $b_k$, $\mathop {\lim }\limits_{t \to 0} {}_0^{\rm{R}}{\mathscr D}_t^{\alpha  - 1 - k}f\left( t \right), k = 0,1, \cdots ,n - 1 $ or even $\mathop {\lim }\limits_{t \to 0} {}_0^{\rm{R}}{\mathscr D}_t^{\alpha }f\left( t \right)$ equal zero too. In this case, ${}_a^{\rm{R}}{\mathscr D}_t^\alpha f\left( t \right) = {}_a^{\rm{C}}{\mathscr D}_t^\alpha f\left( t \right)$ and their Laplace transform are also equal.

\begin{remark}\label{Remark 3}
Before closing the main results, the main contribution of this work are summarized as follows.
\begin{enumerate}[i)]
  \item The Leibniz rules of Riemann--Liouville fractional integral and derivative for $f(t)t^m$ are developed.
  \item It is proved that the Leibniz rule of Riemann--Liouville derivative does not satisfy Caputo derivative.
  \item The Leibniz rule of Caputo derivative is established with the help of the series representation of Caputo derivative.
  \item The simplified scenario and its corresponding conditions are provided for the Leibniz rule of Caputo derivative.
  \item By using the series representation, the Laplace transforms are derived for ${{}_0^{\rm R}{\mathscr I}_t^\alpha f\left( t \right)}$ and ${{}_0^{\rm C}{\mathscr D}_t^\alpha f\left( t \right)}$.
  \item Assuming $f(\cdot)$ is an analytic function, the relationship between ${\frac{1}{{{s^\alpha }}}{F^{\left( m \right)}}\left( s \right)}$ and ${}_0^{\rm{R}}{\mathscr I}_t^\alpha \left\{f\left( t \right) {{{\left( { - t} \right)}^m}} \right\}$ is built.
  \item In view of the nonzero initial instant, the Laplace transforms are derived for ${{}_a^{\rm R}{\mathscr I}_t^\alpha f\left( t \right)}$ and ${{}_a^{\rm C}{\mathscr D}_t^\alpha f\left( t \right)}$.
  \item With the generalized Laplace transform, ${\mathscr L}_a\left\{ {{}_a^{\rm{R}}{\mathscr I}_t^\alpha f\left( t \right)} \right\}$ and ${\mathscr L}_a\left\{ {{}_a^{\rm{C}}{\mathscr D}_t^\alpha f\left( t \right)} \right\}$ are deduced in concise form.
  \item The singularity is discussed for the Laplace transform of ${\mathscr L}_a\left\{ {{}_0^{\rm{R}}{\mathscr D}_t^\alpha f\left( t \right)} \right\}$ with an analytic function $f(\cdot)$.
  \item The equivalence of ${}_0^{\rm{R}}{\mathscr D}_t^{\alpha  - 1-k}f\left( t \right) = 0$ and $f^{(k)}\left( t \right) = 0 ,~k = 0,1, \cdots ,n - 1$ is discussed for $n$-th continuously differentiable $f(\cdot)$.
\end{enumerate}
\end{remark}

\section{Simulation Study}\label{Section 4}
\setcounter{section}{4} \setcounter{equation}{0}
In this section, four numerical examples are provided to illustrate the validity  and the usefulness of the proposed results. Additionally, ${}_a^{\rm{C}}{\mathscr D}_t^{-\beta}f\left( t \right)$ with $\beta>0$ is calculated by ${}_a^{\rm{R}}{\mathscr I}_t^{\beta}f\left( t \right)$ instead.

\begin{example}\label{Example1}
Provided that $0 < \alpha  < 1$, $f\left( t \right) = t - a$ and $g\left( t \right) = 1$, it follows
\begin{equation}\label{Eq57}
{\textstyle {}_a^{\rm{C}}{\mathscr D}_t^\alpha \left( {t - a} \right) = \frac{{{{\left( {t - a} \right)}^{1 - \alpha }}}}{{\Gamma \left( {2 - \alpha } \right)}}.}
\end{equation}

The regular Leibniz rule (\ref{Eq21}) corresponds
\begin{equation}\label{Eq58}
{\textstyle\begin{array}{l}
 \sum\nolimits_{k = 0}^{ + \infty } {\left( {\begin{smallmatrix}
\alpha \\
k
\end{smallmatrix}} \right){f^{\left( k \right)}}\left( t \right){}_a^{\rm{C}}{\mathscr D}_t^{\alpha  - k}g\left( t \right)}\\
= \left( {\begin{smallmatrix}
\alpha \\
0
\end{smallmatrix}} \right)\left( {t - a} \right){}_a^{\rm{C}}{\mathscr D}_t^\alpha g\left( t \right) + \left( {\begin{smallmatrix}
\alpha \\
1
\end{smallmatrix}} \right){}^{\rm R}_a{\mathscr I}_t^{1 - \alpha }g\left( t \right)\\
 =\alpha \frac{{{{\left( {t - a} \right)}^{1 - \alpha }}}}{{\Gamma \left( {2 - \alpha } \right)}},
\end{array}}
\end{equation}
which differs from (\ref{Eq57}). This proves the incorrect use of Leibniz rule (\ref{Eq21}) for Caputo derivative.

From the proposed compensation formula in Theorem \ref{Theorem 3}, one has
\begin{equation}\label{Eq59}
\begin{array}{l}
{}_a^{\rm{C}}{\mathscr D}_t^\alpha \left\{ {f\left( t \right)g\left( t \right)} \right\} \\
 = \sum\nolimits_{k = 0}^{ + \infty } {\left( {\begin{smallmatrix}
\alpha \\
k
\end{smallmatrix}} \right){f^{\left( k \right)}}\left( t \right){}_a^{\rm{C}}{\mathscr D}_t^{\alpha  - k}g\left( t \right)} + \left[ {f\left( t \right) - f\left( a \right)} \right]\frac{{g\left( a \right){{\left( {t - a} \right)}^{ - \alpha }}}}{{\Gamma \left( {1 - \alpha } \right)}}\\
= \alpha \frac{{{{\left( {t - a} \right)}^{1 - \alpha }}}}{{\Gamma \left( {2 - \alpha } \right)}} + \frac{{{{\left( {t - a} \right)}^{1 - \alpha }}}}{{\Gamma \left( {2 - \alpha } \right)}}\left( {1 - \alpha } \right)\\
 = {}_a^{\rm{C}}{\mathscr D}_t^\alpha \left( {t - a} \right).
\end{array}
\end{equation}
\end{example}
\begin{example}\label{Example2}
In a similar way, setting $0 < \alpha  < 1$, $f\left( t \right) = g\left( t \right) = t$, it leads to
\begin{equation}\label{Eq60}
{\textstyle \begin{array}{rl}
{}_a^{\rm{C}}{\mathscr D}_t^\alpha {t^2} =&\hspace{-6pt} {}_a^{\rm{C}}{\mathscr D}_t^\alpha \left[ {{{\left( {t - a} \right)}^2} + 2a\left( {t - a} \right) + {a^2}} \right]\vspace{3pt}\\
 =&\hspace{-6pt} \frac{{2{{\left( {t - a} \right)}^{2 - \alpha }}}}{{\Gamma \left( {3 - \alpha } \right)}} + 2a\frac{{{{\left( {t - a} \right)}^{1 - \alpha }}}}{{\Gamma \left( {2 - \alpha } \right)}}\\
 =&\hspace{-6pt} \frac{{2{{\left( {t - a} \right)}^{1 - \alpha }}}}{{\Gamma \left( {3 - \alpha } \right)}}\left( {t + a - a\alpha } \right).
\end{array}}
\end{equation}

By using (\ref{Eq21}), the result
\begin{equation}\label{Eq61}
\begin{array}{l}
\sum\nolimits_{k = 0}^{ + \infty } {\left( {\begin{smallmatrix}
\alpha \\
k
\end{smallmatrix}} \right){f^{\left( k \right)}}\left( t \right){}_a^{\rm{C}}{\mathscr D}_t^{\alpha  - k}g\left( t \right)}\\
 = \left( {\begin{smallmatrix}
\alpha \\
0
\end{smallmatrix}} \right)t\frac{{{{\left( {t - a} \right)}^{1 - \alpha }}}}{{\Gamma \left( {2 - \alpha } \right)}} + \left( {\begin{smallmatrix}
\alpha \\
1
\end{smallmatrix}} \right)\big[ {\frac{{{{\left( {t - a} \right)}^{2 - \alpha }}}}{{\Gamma \left( {3 - \alpha } \right)}} + \frac{{a{{\left( {t - a} \right)}^{1 - \alpha }}}}{{\Gamma \left( {2 - \alpha } \right)}}} \big]\\
 = \frac{{{{\left( {t - a} \right)}^{1 - \alpha }}}}{{\Gamma \left( {3 - \alpha } \right)}}\left( {2t + a\alpha - a\alpha^2 } \right)
\end{array}
\end{equation}
emerges, which is different from (\ref{Eq60}). After adding the compensation term in Theorem \ref{Theorem 3}, the desired result can be obtained
\begin{equation}\label{Eq62}
\begin{array}{l}
{}_a^{\rm{C}}{\mathscr D}_t^\alpha \left\{ {f\left( t \right)g\left( t \right)} \right\} \\
= \sum\nolimits_{k = 0}^{ + \infty } {\left( {\begin{smallmatrix}
\alpha \\
k
\end{smallmatrix}} \right){f^{\left( k \right)}}\left( t \right){}_a^{\rm{C}}{\mathscr D}_t^{\alpha  - k}g\left( t \right)} + \left[ {f\left( t \right) - f\left( a \right)} \right]\frac{{g\left( a \right){{\left( {t - a} \right)}^{ - \alpha }}}}{{\Gamma \left( {1 - \alpha } \right)}}\\
 = \sum\nolimits_{k = 0}^{ + \infty } {\left( {\begin{smallmatrix}
\alpha \\
k
\end{smallmatrix}} \right){f^{\left( k \right)}}\left( t \right){}_a^{\rm{C}}{\mathscr D}_t^{\alpha  - k}g\left( t \right)} + \frac{{{{\left( {t - a} \right)}^{1 - \alpha }}}}{{\Gamma \left( {3 - \alpha } \right)}}\left( {2a - 3a\alpha  + a{\alpha ^2}} \right)\\
 = {}_a^{\rm{C}}{\mathscr D}_t^\alpha {t^2}.
\end{array}
\end{equation}
\end{example}

\begin{example}
Defining $f(t)=t^p+k$ with $\alpha>0$, $p-\alpha>-1$, its Riemann--Liouville derivative follows
\begin{equation}\label{Eq63}
{\textstyle {{}_0^{\rm{R}}{\mathscr D}_t^\alpha f\left( t \right)} =\frac{\Gamma(p+1)}{\Gamma(p-\alpha+1)}t^{p-\alpha}+k\frac{1}{\Gamma(1-\alpha)}t^{-\alpha}.}
\end{equation}

By applying the formula (\ref{Eq31}), one has ${\mathscr L}\left\{  f\left( t \right) \right\} =\frac{{\Gamma \left( {p + 1} \right)}}{{{s^{p  + 1}}}} + \frac{k}{{{s}}}$. Only when $0<\alpha<1$, one has
\begin{equation}\label{Eq64}
{\textstyle {\mathscr L}\left\{ {{}_0^{\rm{R}}{\mathscr D}_t^\alpha f\left( t \right)} \right\} =\frac{{\Gamma \left( {p + 1} \right)}}{{{s^{p - \alpha  + 1}}}} + \frac{k}{{{s^{ - \alpha  + 1}}}},}
\end{equation}
and the following relation holds
\begin{equation}\label{Eq65}
{\textstyle {\mathscr L}\left\{ {{}_0^{\rm{R}}{\mathscr D}_t^\alpha f\left( t \right)} \right\}=s^\alpha{\mathscr L}\left\{  f\left( t \right) \right\},}
\end{equation}
where no initial condition exists.
\end{example}

\section{Conclusions}\label{Section 5}
In this paper, two essential applications of fractional calculus have been studied by introducing the series representation. Specifically, it has proven that the commonly used Leibniz rule is not applicable to Caputo definition and the well-known Laplace transform of Riemann--Liouville is conditional. These concluded points confirm the fact that many recently published results suffer from an incorrect use of such questionable results. More importantly, the correct form of Leibniz rule and the exact condition of Laplace transform are subsequently deduced. Finally, three examples are presented to illustrate the applicability and efficiency of the presented tools. It is believed that the proposed methods indeed open a new way to analyze and solve the related problems.


\section*{Disclosure Statement}
No potential conflict of interest was reported by the authors.

\section*{Acknowledgments}
The authors would like to thank the Associate Editor and the anonymous reviewers for their keen and insightful comments which greatly improved the contents and presentation. The work described in this paper was supported by the National Natural Science Foundation of China (61601431, 61573332, 61973291), the Anhui Provincial Natural Science Foundation (1708085QF141) and the fund of China Scholarship Council (201806345002).
\bibliographystyle{tfnlm}
\bibliography{database}

\begin{thebibliography}{10}
\providecommand{\url}[1]{\normalfont{#1}}
\providecommand{\urlprefix}{Available from: }

\bibitem{West:2016Book}
West~BJ. {Fractional Calculus View of Complexity: Tomorrow's Science}. Boca
  Rato: CRC Press; 2016.

\bibitem{Boyadzhiev:2018ITSF}
Boyadzhiev~D, Kiskinov~H, Zahariev~A. Integral representation of solutions of
  fractional system with distributed delays. Integral Transforms and Special
  Functions. 2018;\hspace{0pt}29(9):725--744.

\bibitem{Sun:2018CNSNS}
Sun~HG, Zhang~Y, Baleanu~D, et~al. A new collection of real world applications
  of fractional calculus in science and engineering. Communications in
  Nonlinear Science and Numerical Simulation. 2018;\hspace{0pt}64:213--231.

\bibitem{Wei:2019NODY}
Wei~YH, Sheng~D, Chen~YQ, et~al. Fractional order chattering-free robust
  adaptive backstepping control technique. Nonlinear Dynamics.
  2019;\hspace{0pt}95(3):2383--2394.

\bibitem{Liu:2015SP}
Liu~DY, Tian~Y, Boutat~D, et~al. An algebraic fractional order differentiator
  for a class of signals satisfying a linear differential equation. Signal
  Processing. 2015;\hspace{0pt}116:78--90.

\bibitem{Liu:2017Auto}
Liu~DY, Zheng~G, Boutat~D, et~al. Non-asymptotic fractional order
  differentiator for a class of fractional order linear systems. Automatica.
  2017;\hspace{0pt}78:61--71.

\bibitem{Srivastava:1993ITSF}
Srivastava~HM, Yakubovich~SB, Luchko~YF. The convolution method for the
  development of new {Leibniz} rules involving fractional derivatives and of
  their integral analogues. Integral Transforms and Special Functions.
  1993;\hspace{0pt}1(2):119--134.

\bibitem{Tarasov:2013CNSNS}
Tarasov~VE. No violation of the {Leibniz} rule. {No} fractional derivative.
  Communications in Nonlinear Science and Numerical Simulation.
  2013;\hspace{0pt}18(11):2945--2948.

\bibitem{Tarasova:2016FDC}
Tarasova~VV, Tarasov~VE. Elasticity for economic processes with memory:
  fractional differential calculus approach. Fractional Differential Calculus.
  2016;\hspace{0pt}6(2):219--232.

\bibitem{Tarasov:2016JCND}
Tarasov~VE. Leibniz rule and fractional derivatives of power functions. Journal
  of Computational and Nonlinear Dynamics. 2016;\hspace{0pt}11(3). ID.031014.

\bibitem{Chen:2014CNSNS}
Chen~DY, Zhang~RF, Liu~XZ, et~al. Fractional order {Lyapunov} stability theorem
  and its applications in synchronization of complex dynamical networks.
  Communications in Nonlinear Science and Numerical Simulation.
  2014;\hspace{0pt}19(12):4105--4121.

\bibitem{Mujumdar:2015TM}
Mujumdar~A, Tamhane~B, Kurode~S. Observer-based sliding mode control for a
  class of noncommensurate fractional-order systems. IEEE/ASME Transactions on
  Mechatronics. 2015;\hspace{0pt}20(5):2504--2512.

\bibitem{Oldham:1974Book}
Oldham~K, Spanier~J. {The Fractional Calculus Theory and Applications of
  Differentiation and Integration to Arbitrary Order}. San Diego: Elsevier;
  1974.

\bibitem{Sayevand:2018CNSNS}
Sayevand~K, Machado~JT, Baleanu~D. A new glance on the {Leibniz} rule for
  fractional derivatives. Communications in Nonlinear Science and Numerical
  Simulation. 2018;\hspace{0pt}62:244--249.

\bibitem{Osler:1970JMA}
Osler~TJ. The fractional derivative of a composite function. SIAM Journal on
  Mathematical Analysis. 1970;\hspace{0pt}1(2):288--293.

\bibitem{Osler:1971AMM}
Osler~TJ. Fractional derivatives and {Leibniz} rule. American Mathematical
  Monthly. 1971;\hspace{0pt}78(6):645--649.

\bibitem{Osler:1972MC}
Osler~TJ. The integral analog of the {Leibniz} rule. Mathematics of
  Computation. 1972;\hspace{0pt}26(120):903--915.

\bibitem{Tremblay:2013ITSF}
Tremblay~R, Gaboury~S, Fugere~BJ. A new {Leibniz} rule and its integral
  analogue for fractional derivatives. Integral Transforms and Special
  Functions. 2013;\hspace{0pt}24(2):111--128.

\bibitem{Aguila:2014CNSNS}
Aguila-Camacho~N, Duarte-Mermoud~MA, Gallegos~JA. Lyapunov functions for
  fractional order systems. Communications in Nonlinear Science and Numerical
  Simulation. 2014;\hspace{0pt}19(9):2951--2957.

\bibitem{Aguila:2015CNSNS}
Aguila-Camacho~N, Duarte-Mermoud~MA. Comments on ``{Fractional order Lyapunov
  stability} theorem and its applications in synchronization of complex
  dynamical networks''. Communications in Nonlinear Science and Numerical
  Simulation. 2015;\hspace{0pt}25(1):145--148.

\bibitem{Hua:2018NEU}
Hua~CC, Ning~JH, Zhao~GL, et~al. Output feedback {NN} tracking control for
  fractional-order nonlinear systems with time-delay and input quantization.
  Neurocomputing. 2018;\hspace{0pt}290:229--237.

\bibitem{Ishteva:2005MSRJ}
Ishteva~M, Boyadjiev~L, Scherer~R. On the {Caputo} operator of fractional
  calculus and {C-Laguerre} functions. Mathematical Sciences Research Journal.
  2005;\hspace{0pt}9(6):161--174.

\bibitem{Freed:2007FCAA}
Freed~A, Diethelm~K. Caputo derivatives in viscoelasticity: a non-linear
  finite-deformation theory for tissue. Fractional Calculus and Applied
  Analysis. 2007;\hspace{0pt}10(3):219--248.

\bibitem{Diethelm:2010Book}
Diethelm~K. The analysis of fractional differential equations: An
  application-oriented exposition using differential operators of {Caputo}
  type. Heidelberg: Springer; 2010.

\bibitem{Ortigueira:2015JCP}
Ortigueira~MD, Machado~JAT. What is a fractional derivative? Journal of
  Computational Physics. 2015;\hspace{0pt}293:4--13.

\bibitem{Teodoro:2019JCP}
Teodoro~GS, Machado~JT, De~Oliveira~EC. A review of definitions of fractional
  derivatives and other operators. Journal of Computational Physics.
  2019;\hspace{0pt}388:195--208.

\bibitem{Zhou:2017ISA}
Zhou~X, Wei~YH, Liang~S, et~al. Robust fast controller design via nonlinear
  fractional differential equations. ISA Transactions.
  2017;\hspace{0pt}69:20--30.

\bibitem{Wu:2017AMC}
Wu~GC, Baleanu~D, Luo~WH. Lyapunov functions for {Riemann--Liouville}-like
  fractional difference equations. Applied Mathematics and Computation.
  2017;\hspace{0pt}314:228--236.

\bibitem{Lundberg:2007ICSM}
Lundberg~KH, Miller~HR, Trumper~DL. Initial conditions, generalized functions,
  and the laplace transform troubles at the origin. IEEE Control Systems
  Magazine. 2007;\hspace{0pt}27(1):22--35.

\bibitem{Belkhatir:2018SCL}
Belkhatir~Z, Laleg-Kirati~TM. Parameters and fractional differentiation orders
  estimation for linear continuous-time non-commensurate fractional order
  systems. Systems $\&$ Control Letters. 2018;\hspace{0pt}115:26--33.

\bibitem{Williams:2012FCAA}
Williams~P. Fractional calculus on time scales with {Taylor's} theorem.
  Fractional Calculus and Applied Analysis. 2012;\hspace{0pt}15(4):616--638.

\bibitem{Wei:2017FCAAb}
Wei~YH, Chen~YQ, Cheng~SS, et~al. A note on short memory principle of
  fractional calculus. Fractional Calculus and Applied Analysis.
  2017;\hspace{0pt}20(6):1382--1404.

\bibitem{Wei:2018arXiv}
Wei~YH, Chen~YQ, Gao~Q, et~al. Infinite series representation of fractional
  calculus: theory and applications. arXiv preprint.
  2018;\hspace{0pt}ArXiv:1901.11134.

\bibitem{Wei:2019CNSNS}
Wei~YH, Gao~Q, Liu~DY, et~al. On the series representation of nabla discrete
  fractional calculus. Communications in Nonlinear Science and Numerical
  Simulation. 2019;\hspace{0pt}69:198--218.

\bibitem{Podlubny:1999Book}
Podlubny~I. {Fractional Differential Equations: an Introduction to Fractional
  Derivatives, Fractional Differential Eqnations, to Methods of Their Solution
  and Some of Their Applications}. San Diego: Academic Press; 1999.

\bibitem{Samko:1993Book}
Samko~SG, Kilbas~AA, Marichev~OI. {Fractional Integrals and Derivatives: Theory
  and Applications}. Amsterdam: Gordon and Breach Science Publishers; 1993.

\bibitem{Luque:1999ITSF}
Luque~R, Galu{\'e}~L. The application of a generalized {Leibniz} rule to
  infinite sums. Integral Transforms and Special Functions.
  1999;\hspace{0pt}8(1-2):65--76.

\bibitem{Babakhani:2017JPOA}
Babakhani~A, Yadollahzadeh~M, Neamaty~A. Some properties of pseudo-fractional
  operators. Journal of Pseudo-Differential Operators and Applications.
  2017;\hspace{0pt}9(3):677--700.

\bibitem{Sousa:2018arXiv}
Sousa~J, de~Oliveira~EC. Leibniz type rule: {$\Psi$-Hilfer} fractional
  derivative. arXiv. 2018;\hspace{0pt}Preprint arXiv: 1811.02717.

\end{thebibliography}

\end{document}